\newcommand{\norm}[1]{\left\lVert#1\right\rVert}
\newcommand{\abs}[1]{\lvert#1\rvert}
\newcommand{\lrabs}[1]{\left\lvert#1\right\rvert}
\newcommand{\lrbr}[2]{\left\langle#1,{#2}\right\rangle}
\newcommand{\lrdel}[2]{\left(#1,{#2}\right)}
\newcommand{\gdualdel}[4]{\sideset{_{#1}}{_{#2}}{\mathop{\left\langle{#3},{#4}\right\rangle}}}
\newcommand{\Fscr} {{\mathscr F}}
\newcommand{\Hscr} {{\mathscr H}}
\renewcommand{\tilde}{\widetilde}
\renewcommand{\d}{d}
\newcommand{\D}{\textup{D}}
\newcommand{\eps}{{\varepsilon}}
\renewcommand{\phi}{\varphi}
\newcommand{\R}{\mathbbm{R}}
\newcommand{\N}{\mathbbm{N}}
\renewcommand{\P}{\mathbbm{P}}
\newcommand{\E}{\mathbbm{E}}
\newcommand{\dom}{\operatorname{dom}}
\newcommand{\loc}{\textup{loc}}
\newcommand{\sgn}{\operatorname{sgn}}
\renewcommand{\le}{\leq}
\renewcommand{\ge}{\geq}
\renewcommand{\limsup}{\varlimsup}
\renewcommand{\liminf}{\varliminf}
\newcommand{\BIGOP}[1]{\mathop{\mathchoice%
{\raise-0.22em\hbox{\huge $#1$}}%
{\raise-0.05em\hbox{\Large $#1$}}{\hbox{\large $#1$}}{#1}}}
\newcommand{\BIGboxplus}{\mathop{\mathchoice%
{\raise-0.35em\hbox{\huge $\boxplus$}}%
{\raise-0.15em\hbox{\Large $\boxplus$}}{\hbox{\large $\boxplus$}}{\boxplus}}}
\def\leq {\leqslant}
\def\geq {\geqslant}
\numberwithin{equation}{section}
\newtheorem{theorem}{Theorem}[section] 
\theoremstyle{plain}
\newtheorem{defi}[theorem]{Definition}
\newtheorem{conj}[theorem]{Conjecture}
\newtheorem{lem}[theorem]{Lemma}
\newtheorem{prop}[theorem]{Proposition}
\newtheorem{rem}[theorem]{Remark}
\newtheorem{thm}[theorem]{Theorem}
\newtheorem{cor}[theorem]{Corollary}
\newtheorem*{bem*}{Bemerkung}
\newcommand{\ol}{\overline}
\newcommand{\be}{\begin{eqnarray}}
\newcommand{\ee}{\end{eqnarray}}
\newcommand{\ce}{\begin{eqnarray*}}
\newcommand{\de}{\end{eqnarray*}}
\numberwithin{equation}{section}
\begin{document}
\title[Convergence of invariant measures]{Convergence of invariant measures for singular stochastic diffusion equations}
\author[I. Ciotir]{Ioana Ciotir}
\address{Department of Mathematics, Faculty of Economics and Business Administration, ``Al. I. Cuza'' University, Bd. Carol no. 9--11, Ia\c{s}{}i, Romania}
\email{ioana.ciotir@feaa.uaic.ro}
\author[J. M. T\"olle]{Jonas M. T\"olle}
\address{Institut f\"ur Mathematik, Technische Universit\"at Berlin (MA 7-5), Stra\ss{}e des 17. Juni 136, 10623 Berlin, Germany}
\email{toelle@math.tu-berlin.de}
\thanks{Financial support of the DFG Collaborative Research Center 701 (SFB 701) ÒSpectral Structures and Topological Methods in MathematicsÓ (Bielefeld) and the DFG Research Group 718 (Forschergruppe 718) ÒAnalysis and Stochastics in Complex Physical SystemsÓ (Berlin--Leipzig) is gratefully acknowledged.
\\
Both authors would like to thank Viorel Barbu and Michael R\"ockner for helpful comments.
The authors are grateful for the remarks of two referees which helped in improving the paper.}

\begin{abstract}
It is proved that the solutions to the singular stochastic $p$-Laplace equation, $p\in (1,2)$ and the solutions
to the stochastic fast diffusion equation with nonlinearity parameter $r\in (0,1)$ on a bounded open domain $\Lambda\subset\R^d$ with
Dirichlet boundary conditions are continuous in mean, uniformly in time, with respect to the parameters $p$ and $r$ respectively (in the Hilbert spaces $L^2(\Lambda)$, $H^{-1}(\Lambda)$ respectively). The highly
singular limit case $p=1$ is treated with the help of stochastic evolution variational inequalities, where $\mathbbm{P}$-a.s. convergence, uniformly in time, is established.

It is shown that the associated unique invariant measures of the ergodic semigroups converge in the
weak sense (of probability measures).
\end{abstract}
 \keywords{Stochastic evolution equation, stochastic diffusion equation, $p$-Laplace equation, $1$-Laplace equation, total variation flow, fast diffusion equation, ergodic semigroup, unique invariant measure, variational convergence}
 \subjclass[2000]{60H15; 35K67, 37L40, 49J45}

\maketitle

\section{Introduction}

Let $\Lambda\subset\R^d$ be a bounded open domain with Lipschitz boundary $\partial\Lambda$.
Let $\{W(t)\}_{t\ge 0}$ be a $U$-valued cylindrical Wiener process on some filtered probability space
$(\Omega,\Fscr,\{\Fscr(t)\}_{t\ge 0},\P)$, where $U$ is a separable Hilbert space.

We are interested in the following two (families of) stochastic diffusion equations,
the stochastic $p$-Laplacian equation, $p\in (1,\infty)$, $B\in L_2(U,L^2(\Lambda))$,
\[\label{sp}
(\textup{PL}_p)\left\{
\begin{aligned}
\d X_{p}\left( t\right)&=\operatorname{div}\left[\lrabs{\nabla X_{p}(t)}^{p-2}\nabla X_{p}(t)\right]\d t+B\,\d W\left( t\right) \quad&\text{in~}(0,T) \times\Lambda, \\
X_{p}\left( t\right)&=0\quad&\text{on~}\left( 0,T\right) \times \partial \Lambda, \\ 
X_{p}\left( 0\right)&=x\in L^2(\Lambda)\quad&\text{in~}\Lambda.%
\end{aligned}
\right.
\]

The deterministic $p$-Laplace equation arises from geometry,
quasi-regular mappings, fluid dynamics and plasma physics, see \cite{DiBe,Diaz}.
In \cite{Lad}, $(\textup{PL}_p)$ with $B\equiv 0$ is suggested as a model of motion
of non-Newtonian fluids. See \cite{Liu1} for the stochastic equation.

We are also interested in the
stochastic fast diffusion equation $r\in (0,\infty)$, $B\in L_2(U,H^{-1}(\Lambda))$,
\[(\textup{FD}_r)
\left\{\begin{aligned}d Y_r(t) &= \Delta  \left( |Y_r(t)|^{r-1} Y_r(t)   \right)\,dt +B\,d W(t),  &\text{in~}(0,T) \times\Lambda,\\
 Y_r(t)&=0, \  &\text{on~}\left( 0,T\right) \times \partial \Lambda, \\
 Y_r(0)&=y\in H^{-1}(\Lambda),        &\text{in~}\Lambda,\end{aligned}\right.
\]
which models diffusion in plasma physics, curvature flows and
self-organized criticality in sandpile models, see e.g. \cite{BDPR2,BeHo,Rose,Vaz2} and the references therein.

The above equations considered are called \emph{singular} for $p\in (1,2)$, $r\in (0,1)$ and \emph{degenerate}
for $p\in (2,\infty)$, $r\in (1,\infty)$ (porous medium equation). In this paper, we shall investigate the former case.

For $p=1$, equation $(\textup{PL}_1)$ can be heuristically written as a stochastic evolution inclusion,
$B\in L_2(U,H^{-1}(\Lambda))$,
\[\label{sp1}
(\textup{PL}_1)\left\{\begin{aligned}\d X_{1}\left( t\right)&\in\operatorname{div}\left[\operatorname{Sgn}(\nabla X_{1}(t))\right]\d t+B\,\d W\left( t\right) \quad&\text{in~}(0,T) \times\Lambda, \\
X_{1}\left( t\right)&=0\quad&\text{on~}\left( 0,T\right) \times \partial \Lambda, \\ 
X_{1}\left( 0\right)&=x\in L^2(\Lambda)\quad&\text{in~}\Lambda,\end{aligned}\right.
\]
where $\operatorname{Sgn}:\R^d\to 2^{\R^d}$ is defined by
\[\operatorname{Sgn}(u):=\left\{\begin{aligned}&\dfrac{u}{\abs{u}},&&\;\;\text{if}\;\;u\in\R^d\setminus\{0\},\\
                  &\left\{v\in\R^d\;\vert\;\abs{v}\le 1\right\},&&\;\;\text{if}\;\;u=0.
                 \end{aligned}\right.\]
A precise characterization of the $1$-Laplace operator can be found in \cite{ABCM,ACM,Schu}.
A typical $2$-dimensional example for the so-called total variation flow can be
found in image restoration, see \cite{AFP,ACM,AuKo} and the references therein.

We shall, however, take use of the stochastic evolution variational inequality-formulation as in \cite{BDPR}.

We are particularly interested in continuity of the solutions in the parameters $p$ and $r$, especially for
the case $p\to 1$. Stochastic Trotter-type results in this direction
have been obtained by the first named author in \cite{Ciot,Ciot2,Ciot3}.
However, for the case $p\to 1$, we shall need the theory of Mosco convergence of convex functionals as in \cite{A},
since no strong characterization of the limit is available (which could be treated by Yosida-approximation methods). For $B=0$ (i.e., the deterministic equation), the convergence of solutions to
the evolution problem (PL$_p$) was proved in \cite{GKY,Toe3}. See also \cite[Ch. 8.3]{Toe2}.

With the help of a uniqueness result for invariant measures of the equations considered,
obtained by Liu and the second named author \cite{LiuToe}, we prove tightness and the weak
convergence (weak continuity) of invariant measures associated to the ergodic semigroups of the equations
(PL$_p$) and (FD$_r$). See \cite{BDP,BDP2,DPZ2,GesToe} for other result in this direction.

\subsection*{Organization of the paper}

In Section \ref{convergencesec}, we prove that the solutions to the basic examples
are continuous in the parameters $p$ and $r$ resp.

In Section \ref{invsec}, The result of Section \ref{convergencesec} is combined with the uniqueness of invariant
measures proved in \cite{LiuToe} in order to obtain the weak continuity
of invariant measures in the parameters $p$ and $r$ resp.

In Section \ref{singsec}, we prove a convergence result for the stochastic $p$-Laplace equation as $p\to 1$, using another
notion of a solution. For the limit $p=1$, however, uniqueness of the invariant measure
is an open question. The matter is further investigated in \cite{GesToe}.

The Appendix collects some well-known results on Mosco (variational) convergence and
Mosco convergence in $L^p$-spaces, needed for the proof in Section \ref{singsec}.

\section{Convergence of solutions}\label{convergencesec}

Compare with \cite[Theorem 2]{Ciot2}.

\begin{thm}\label{PLthm}
Let $\{p_n\}\subset\left(1\vee \frac{2d}{2+d},2\right]$, $n\in\N$, $p_0\in\left(1\vee \frac{2d}{2+d},2\right]$ such
that $p_n\to p_0$. Let $X_n:=X_{p_n}$, $n\in\N$, $X_0:=X_{p_0}$
be the solutions to $(\textup{PL}_{p_n})$, $n\in\N$, $(\textup{PL}_{p_0})$ resp.
Then for $x\in L^2(\Lambda)$.

\[\lim_n\E\left[\sup_{t\in [0,T]}\norm{X_n(t)-X_0(t)}_{L^2(\Lambda)}^2\right]=0.\]
\end{thm}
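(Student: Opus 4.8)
The plan is to work in the variational (Gelfand triple) framework $W^{1,p}_0(\Lambda)\subset L^2(\Lambda)\subset(W^{1,p}_0(\Lambda))^\ast$, noting that the lower bound $p>1\vee\frac{2d}{2+d}$ is exactly what guarantees the continuous (indeed compact) embedding $W^{1,p}_0(\Lambda)\hookrightarrow L^2(\Lambda)$ and hence an $L^2$-valued variational solution. Writing $A_p(u):=-\operatorname{div}(\abs{\nabla u}^{p-2}\nabla u)=\partial\varphi_p(u)$ for the subdifferential of $\varphi_p(u)=\frac1p\int_\Lambda\abs{\nabla u}^p\,dx$, each $X_n$ solves $dX_n+A_{p_n}(X_n)\,dt=B\,dW$ with the same initial datum $x$ and the same additive noise. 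The decisive structural observation is that, the noise being additive and identical across the family, the stochastic integrals cancel in the difference: $X_n-X_0$ solves the pathwise (in $\omega$) equation $\frac{d}{dt}(X_n-X_0)+A_{p_n}(X_n)-A_{p_0}(X_0)=0$ with vanishing initial datum, so no stochastic term survives and the expectation is taken only at the very end.

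First I would collect $n$-uniform a priori estimates. It\^o's formula for $\norm{X_n(t)}_{L^2(\Lambda)}^2$, the coercivity $\langle A_{p_n}(u),u\rangle=\int_\Lambda\abs{\nabla u}^{p_n}\,dx$ together with the Poincar\'e inequality, Burkholder--Davis--Gundy for the martingale part and the bound $\norm{B}^2_{L_2(U,L^2(\Lambda))}$ for the It\^o correction, give $\E\sup_{t\le T}\norm{X_n(t)}^2_{L^2(\Lambda)}+\E\int_0^T\int_\Lambda\abs{\nabla X_n}^{p_n}\,dx\,dt\le C$ with $C$ independent of $n$. Uniformity is available because, for $n$ large, $p_n$ lies in a compact subinterval $[\underline p,2]\subset(1\vee\frac{2d}{2+d},2]$, so all Sobolev and Poincar\'e constants can be chosen independently of $n$. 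These bounds furnish the gradient integrability that the error estimate below consumes.

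The core step applies the chain rule for the Gelfand triple to $\norm{X_n(t)-X_0(t)}_{L^2(\Lambda)}^2$ and, inserting $A_{p_n}(X_0)$, splits the right-hand side as
\[
\tfrac12\frac{d}{dt}\norm{X_n-X_0}_{L^2(\Lambda)}^2=-\langle A_{p_n}(X_n)-A_{p_n}(X_0),X_n-X_0\rangle-\langle A_{p_n}(X_0)-A_{p_0}(X_0),X_n-X_0\rangle .
\]
The first term, call it $-M_n$, is $\le 0$ by monotonicity of $A_{p_n}$; quantitatively $M_n\ge c\int_\Lambda(\abs{\nabla X_n}+\abs{\nabla X_0})^{p_n-2}\abs{\nabla(X_n-X_0)}^2\,dx$ with $c$ uniform, since $p_n$ is bounded away from $1$. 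The second term, $R_n$, is the only one feeling the parameter shift; integrating by parts it equals $\int_\Lambda(\abs{\nabla X_0}^{p_n-2}-\abs{\nabla X_0}^{p_0-2})\nabla X_0\cdot\nabla(X_n-X_0)\,dx$. Because the initial difference vanishes and $-M_n\le 0$, integrating in time, taking the supremum over $t$ (which is free, the resulting bound being $t$-independent) and then the expectation reduces the theorem to showing $\E\int_0^T\abs{R_n(s)}\,ds\to 0$.

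The \emph{main obstacle} is exactly the vanishing of this quantity, and it is genuinely delicate because of a moving-exponent regularity mismatch: $\nabla(X_n-X_0)$ is controlled only in the shifting space $L^{p_n}(\Lambda)$, while the reference gradient satisfies only $\nabla X_0\in L^{p_0}(\Lambda)$, so when $p_n>p_0$ the super-critical power $\abs{\nabla X_0}^{p_n}$ produced by a crude H\"older or Young bound need not even be integrable. The plan is two-fold. Rather than discarding $M_n$ I would retain it, estimate $R_n$ by a Cauchy--Schwarz against the weight $(\abs{\nabla X_n}+\abs{\nabla X_0})^{p_n-2}$ and then Young's inequality, absorbing $\tfrac12 M_n$ and leaving the remainder $G_n:=\int_\Lambda(\abs{\nabla X_n}+\abs{\nabla X_0})^{2-p_n}\abs{\abs{\nabla X_0}^{p_n-1}-\abs{\nabla X_0}^{p_0-1}}^2\,dx$; the weight is designed to convert the bare gradient difference into the coercive quantity $M_n$, so that only the coefficient difference must be handled. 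The real heart of the matter is then $\E\int_0^T G_n\to 0$, which I would obtain by combining the uniform energy bounds with the pointwise convergence $\abs{\nabla X_0}^{p_n-2}\to\abs{\nabla X_0}^{p_0-2}$ through an equi-integrability (Vitali) argument; to keep the powers that actually occur at or below the integrable level I expect to treat $p_n\uparrow p_0$ and $p_n\downarrow p_0$ separately, in each case comparing against the operator evaluated at whichever of $X_n,X_0$ lives in the larger Sobolev space (on the bounded domain $W^{1,q}_0\subset W^{1,q'}_0$ for $q\ge q'$). This equi-integrability is the quantitative content of the Mosco convergence $\varphi_{p_n}\to\varphi_{p_0}$ collected in the Appendix, and verifying it with all constants uniform in $n$ is the step I anticipate to be the most technical.
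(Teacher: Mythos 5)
Your overall architecture (additive noise cancels in the difference, uniform energy estimates, monotonicity plus a commutator term $R_n$ measuring the parameter shift) is reasonable, but the resolution you propose for the key term does not close, and this is a genuine gap rather than a technicality. First, the insertion of $A_{p_n}(X_0)$ is not even well defined when $p_n>p_0$: the solution $X_0$ is only known to lie in $L^{p_0}((0,T)\times\Omega;W^{1,p_0}_0(\Lambda))$, so $\lvert\nabla X_0\rvert^{p_n-1}$ need not pair with $\nabla(X_n-X_0)$. More importantly, after your weighted Cauchy--Schwarz and Young absorption, the remainder
\[
G_n=\int_\Lambda\bigl(\lvert\nabla X_n\rvert+\lvert\nabla X_0\rvert\bigr)^{2-p_n}\,\bigl\lvert\,\lvert\nabla X_0\rvert^{p_n-1}-\lvert\nabla X_0\rvert^{p_0-1}\bigr\rvert^2\,\d\xi
\]
contains, for either sign of $p_n-p_0$, powers of the gradients strictly above the critical exponent: e.g.\ $\lvert\nabla X_0\rvert^{2p_0-p_n}$ when $p_n<p_0$, and $\lvert\nabla X_0\rvert^{p_n}$ (or, in the symmetric decomposition, $\lvert\nabla X_n\rvert^{2p_n-p_0}$) when $p_n>p_0$. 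The energy estimates give only $\nabla X_0\in L^{p_0}$ and $\nabla X_n\in L^{p_n}$, so these families are not dominated and not uniformly integrable; the Vitali argument you invoke has nothing to run on. Switching which solution you linearize around, as you suggest, merely moves the supercritical power from $X_0$ to $X_n$. This moving-exponent mismatch is exactly the obstruction that prevents a direct comparison of the two variational solutions.

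The paper circumvents this by never comparing $A_{p_n}$ and $A_{p_0}$ on the bare solutions. It introduces the doubly regularized operators $A_p^\eps(u)=-(1-\eps\Delta)^{-1}\operatorname{div}[a_p^\eps(\nabla(1-\eps\Delta)^{-1}u)]$, where $a_p^\eps$ is the Yosida approximation of $a_p(r)=\lvert r\rvert^{p-2}r$, and splits $\lVert X_n-X_0\rVert^2$ into three pieces via $X_n^\eps$ and $X_0^\eps$. At fixed $\eps$ all gradients live in $L^2$, $a_{p_n}^\eps(u)\to a_{p_0}^\eps(u)$ pointwise with a uniform bound (this is where the Mosco convergence of $\Psi^{p}$ from the Appendix enters), so the middle term is handled by dominated convergence in $L^2$; the two outer terms are controlled uniformly in $p$ by an a priori estimate on $\int\lvert a_p^\eps(\nabla\tilde X_p^\eps)\rvert^2$ obtained through a Jensen-inequality interpolation from the energy bound. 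If you want to keep your direct route, you would need a genuinely new ingredient, namely higher (supercritical, uniform in $n$) integrability of the gradients, which is not available here; otherwise you should regularize first, as the paper does.
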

\begin{proof}
For $p\in (1,\infty)$, define $a_p:\R^d\to\R^d$ by $a_p(x):=\abs{x}^{p-2}x$.
Furthermore, let
$A_{p}:W_{0}^{1,p}\left(\Lambda\right) \rightarrow
(W_{0}^{1,p})^\ast\left( \Lambda\right)$ be defined by
$A_{p}\left( y\right):=-\operatorname{div}\left[ a_{p}\left( \nabla y\right) %
\right]$, where $y\in W_{0}^{1,p}\left( \Lambda\right)$.
To be more specific,
\[
\gdualdel{(W^{1,p})^\ast}{W^{1,p}}{A_{p}\left( y\right)}{z} =\int_{\Lambda}\lrbr{a_p(\nabla y)}{\nabla z}\,\d\xi ,\quad\forall z\in W_{0}^{1,p}\left( \Lambda\right) .
\]

We first consider the following approximating equations for $(\textup{PL}_p)$
\begin{equation} \label{approx1}
\left\{ 
\begin{aligned}
\d X_{p}^{\varepsilon }\left( t\right) +A_{p}^{\varepsilon }\left(
X_{p}^{\varepsilon }(t)\right) \d t&=B\,\d W\left( t\right) \\ 
X_{p}^{\varepsilon }\left( 0\right) &=x%
\end{aligned}%
\right. 
\end{equation}%
where for any $u\in L^2(\Lambda)$,
\begin{equation*}
A_{p}^{\varepsilon }\left( u\right) =-\left( 1-\varepsilon \Delta \right)
^{-1}\operatorname{div}\left[ a_{p}^{\varepsilon }\left( \nabla \left( 1-\varepsilon
\Delta \right) ^{-1}u\right) \right]
\end{equation*}%
and $a_{p}^{\varepsilon }$ is the Yosida approximation of $a_{p}$ i.e., for any $r\in\R^d$,%
\begin{equation*}
a_{p}^{\varepsilon }\left( r\right) =\frac{1}{\varepsilon }\left( 1-\left(
1+\varepsilon a_{p}\right) ^{-1}\left( r\right) \right).
\end{equation*}%
In particular, for $u,v\in L^2(\Lambda)$,
\[\left(A_p^\eps(u),v\right)_{L^2(\Lambda)}=\int_\Lambda\lrbr{a_p^\eps(\nabla R_\eps u)}{\nabla R_\eps(v)}\,\d\xi,\]
where $R_\eps:=(1-\eps\Delta)^{-1}$ is the resolvent of the Dirichlet Laplacian.

We shall use the following strategy ($\P\text{-a.s.}$)
\begin{multline*}
\left\Vert X_{n}\left( t\right) -X_0\left( t\right) \right\Vert _{L^2(\Lambda)}^2\\
\leq 3\left\Vert X_{n}\left( t\right) -X_{n}^{\varepsilon }\left( t\right)
\right\Vert _{L^2(\Lambda)}^2+3\left\Vert X_{n}^{\varepsilon }\left( t\right)
-X^{\varepsilon }_0\left( t\right) \right\Vert _{L^2(\Lambda)}^2\\
+3\left\Vert X^{\varepsilon
}_0\left( t\right) -X_0\left( t\right) \right\Vert _{L^2(\Lambda)}^2\\
=:I_1(n,\eps)+I_2(n,\eps)+I_3(\eps).
\end{multline*}%
uniformly in $t\in \left[ 0,T\right] .$

At this point we need to prove the following lemma.
We introduce the notation $r^p_\eps \left(
r\right):=\left( 1+\varepsilon a_{p}\right) ^{-1}\left( r\right) $.

\begin{lem}\label{apriorilemma}
Under our assumptions, if we let $X_p^\eps$ be the solution to \eqref{approx1} and
$\tilde{X}_p^\eps:=(1-\eps\Delta)^{-1}X_p^\eps$, we have that%
\begin{equation}
\mathbbm{E}\int_{0}^{t}\int_{\Lambda}\left\vert r_{\varepsilon }^p\left( \nabla 
\tilde{X}_{p}^{\varepsilon }\left( s\right) \right) \right\vert ^{p}\,\d\xi\,
\d s\leq C_{t}\left( \left\Vert x\right\Vert _{L^2(\Lambda)}^{2}+\norm{B}^2_{HS}\right) ,
\label{estimare_J}
\end{equation}%
for all $t\in \left[ 0,T\right]$.
\end{lem}
\begin{proof}
We know by the definition of $a_{p}$ that 
\begin{equation*}
\left\langle a_{p}\left( r\right) ,r\right\rangle \geq\left\vert
r\right\vert ^{p}.
\end{equation*}

On the other hand we have by It\={o}'s formula, applied to the function $u\mapsto\norm{u}_{L^2(\Lambda)}^2$, that 
\begin{equation}\label{ito}
\mathbb{E}\left\Vert X_{p}^{\varepsilon }\left( t\right) \right\Vert
_{L^2(\Lambda)}^{2}+2\mathbb{E}\int_{0}^{t}\int_{\Lambda}\left\langle
a_{p}^{\varepsilon }\left( \nabla \tilde{X}_{p}^{\varepsilon }\left(
s\right) \right) ,\nabla \tilde{X}_{p}^{\varepsilon }\left( s\right)
\right\rangle \,\d\xi\,\d s
\end{equation}%
\begin{equation*}
\leq C_{t}\left( \left\Vert x\right\Vert _{L^2(\Lambda)}^{2}+\norm{B}^2_{HS}\right) .
\end{equation*}

By the definition of the Yosida approximation we have that 
\begin{equation*}
a_{p}^{\varepsilon }\left( r\right) =a_{p}\left( r^p_\eps \left(
r\right) \right) 
\end{equation*}%
and 
\begin{equation*}
\left\langle a_{p}^{\varepsilon }\left( r\right) ,r\right\rangle
=\left\langle a_{p}^{\varepsilon }\left( r^p_\varepsilon \left( r\right)
\right) ,r^p_\varepsilon \left( r\right) \right\rangle +\frac{1}{%
\varepsilon }\left\vert r-r^p_\varepsilon \left( r\right) \right\vert ^{2}.
\end{equation*}

We rewrite as follows%
\begin{eqnarray*}
&&\mathbb{E}\int_{0}^{t}\int_{\Lambda}\left\langle a_{p}^{\varepsilon
}\left( \nabla \tilde{X}_{p}^{\varepsilon }\left( s\right) \right) ,\nabla 
\tilde{X}_{p}^{\varepsilon }\left( s\right) \right\rangle \,\d\xi\,\d s \\
&\geq &\mathbb{E}\int_{0}^{t}\int_{\Lambda}\left\langle a_{p}\left(
r^p_{\varepsilon }\left( \nabla \tilde{X}_{p}^{\varepsilon }\left( s\right)
\right) \right) ,r^p_{\varepsilon }\left( \nabla \tilde{X}_{p}^{\varepsilon
}\left( s\right) \right) \right\rangle \,\d\xi\,\d s \\
&\geq &\mathbb{E}\int_{0}^{t}\int_{\Lambda}\left\vert r^p_{\varepsilon
}\left( \nabla \tilde{X}_{p}^{\varepsilon }\left( s\right) \right)
\right\vert ^{p}\,\d\xi\,\d s.
\end{eqnarray*}%
Plugging into \eqref{ito} proves \eqref{estimare_J}.
\end{proof}

We shall prove now that $\P\text{-a.s.}$
\begin{equation*}
\underset{\varepsilon \rightarrow 0}{\lim }\underset{t\in \left[ 0,T\right] }%
{\sup }\left\Vert X_{p}\left( t\right) -X_{p}^{\varepsilon }\left( t\right)
\right\Vert _{L^2(\Lambda)}^2=0,\quad \text{uniformly in }p\in \left(
\tfrac{2d}{d+2},2\right) .
\end{equation*}

We set $\tilde{X}_{p}^{\varepsilon }=\left( 1-\varepsilon \Delta \right)
^{-1}X_{p}^{\varepsilon }$ and $\tilde{X}_{p}^{\lambda }=\left( 1-\lambda
\Delta \right) ^{-1}X_{p}^{\lambda }$. Then by \eqref{approx1}, we have that 
\begin{multline*}
\frac{1}{2}\left\Vert X_{p}^{\varepsilon }\left( t\right) -X_{p}^{\lambda
}\left( t\right) \right\Vert _{L^2(\Lambda)}^2\\
+\int_{0}^{t}\int_{\Lambda}\left\langle a_{p}^{\varepsilon }\left(
\nabla \tilde{X}_{p}^{\varepsilon }\left( s\right) \right) -a_{p}^{\lambda
}\left( \nabla \tilde{X}_{p}^{\lambda }\left( s\right) \right) ,\nabla 
\tilde{X}_{p}^{\varepsilon }\left( s\right) -\nabla \tilde{X}_{p}^{\lambda
}\left( s\right) \right\rangle \,\d\xi\,\d s=0\quad\P\text{-a.s.}.
\end{multline*}

Setting $\nabla \tilde{X}_{p}^{\varepsilon }\left( s\right) =u^{\varepsilon }
$ and $\nabla \tilde{X}_{p}^{\lambda }\left( s\right) =u^{\lambda }$ and
using 
\begin{equation*}
a_{p}^{\varepsilon }\left( u\right) \in a_{p}\left( \left( 1+\varepsilon
a_{p}\right) ^{-1}\left( u\right) \right) ,
\end{equation*}%
we get by the monotonicity of $a_{p}$ that%
\begin{eqnarray*}
&&\left\langle a_{p}^{\varepsilon }\left( u^{\varepsilon }\right)
-a_{p}^{\lambda }\left( u^{\lambda }\right) ,u^{\varepsilon }-u^{\lambda
}\right\rangle  \\
&\geq &\left\langle a_{p}^{\varepsilon }\left( u^{\varepsilon }\right)
-a_{p}^{\lambda }\left( u^{\lambda }\right) ,\varepsilon a_{p}^{\varepsilon
}\left( u^{\varepsilon }\right) -\lambda a_{p}^{\lambda }\left( u^{\lambda
}\right) \right\rangle .
\end{eqnarray*}

This leads to%
\begin{multline}\label{estim}
\frac{1}{2}\left\Vert X_{p}^{\varepsilon }\left( t\right) -X_{p}^{\lambda
}\left( t\right) \right\Vert _{L^2(\Lambda)}^2\\
\leq \int_{0}^{t}\int_{\Lambda}\left( \varepsilon \left\vert
a_{p}^{\varepsilon }\left( \nabla \tilde{X}_{p}^{\varepsilon }\left(
s\right) \right) \right\vert ^{2}+\lambda \left\vert a_{p}^{\lambda
}\left( \nabla \tilde{X}_{p}^{\lambda }\left( s\right) \right) \right\vert
^{2}\right)\,\d\xi\,\d s\quad\P\text{-a.s.}.
\end{multline}

We can now prove that $\P\text{-a.s.}$

\begin{equation}
\int_{0}^{t}\int_{\Lambda}\left\vert a_{p}^{\varepsilon }\left( \nabla 
\tilde{X}_{p}^{\varepsilon }\left( s\right) \right) \right\vert ^{2}\,\d\xi\,
\d s\leq C_t \label{bound}
\end{equation}%
for some $C_t$ independent of $p$ and $\varepsilon$.

Using Jensen's inequality (for $t\mapsto t^{p/(2p-2)}$) and taking into account that
$\left\vert a_{p}\left( r\right) \right\vert \leq\left\vert
r\right\vert ^{p-1},$ we obtain%
\begin{equation}\label{jenseneq}\begin{split}
&\int_{0}^{t}\int_{\Lambda}\left\vert a_{p}^{\varepsilon }\left(
\nabla \tilde{X}_{p}^{\varepsilon }\left( s\right) \right) \right\vert
^{2}\,\d\xi\,\d s \\
\leq &(t\left\vert \Lambda\right\vert)^{1-((2p-2)/p)}\left(
\int_{0}^{t}\int_{\Lambda}\left\vert a_{p}\left( r^p_{\varepsilon }\left(
\nabla \tilde{X}_{p}^{\varepsilon }\left( s\right) \right) \right)
\right\vert ^{p/(p-1)}\,\d\xi\,\d s\right) ^{(2p-2)/p} \\
\leq &(1+t\left\vert \Lambda\right\vert)\left( \int_{0}^{t}\int_{\Lambda}\left\vert r^p_{\varepsilon
}\left( \nabla \tilde{X}_{p}^{\varepsilon }\left( s\right) \right)
\right\vert ^{p}\,\d\xi\,\d s\right) ^{(2p-2)/p}\\
\leq &C_t+C_t\left( \int_{0}^{t}\int_{\Lambda}\left\vert r^p_{\varepsilon
}\left( \nabla \tilde{X}_{p}^{\varepsilon }\left( s\right) \right)
\right\vert ^{p}\,\d\xi\,\d s\right),
\end{split}\end{equation}%
where $\left\vert \Lambda\right\vert =\int_{\Lambda}\,\d\xi $.

Now by Lemma \ref{apriorilemma} we have \eqref{bound} for a constant $C_t$
independent of $p$ and $\varepsilon ,$ and passing to the limit for $%
\varepsilon ,\lambda \rightarrow 0$ in \eqref{estim} we get that $\P\text{-a.s.}$%
\begin{equation*}
\underset{\varepsilon \rightarrow 0}{\lim }\underset{t\in \left[ 0,T\right] }%
{\sup }\left\Vert X_{p}\left( t\right) -X_{p}^{\varepsilon }\left( t\right)
\right\Vert _{L^2(\Lambda)}^2=0,\quad\text{uniformly in }p\in \left(1\vee
\frac{2d}{d+2},2\right) .
\end{equation*}

As a consequence, $I_1(n,\eps)$ and $I_3(\eps)$ tend to zero as $\eps\downarrow 0$, uniformly in $n$.

For $I_{2}(n,\eps)$, using the monotonicity of $a_{p_{n}}^{\varepsilon }$we have%
\begin{multline*}
\frac{1}{2}\left\Vert X_{p_{n}}^{\varepsilon }\left( t\right)
-X_{p_{0}}^{\varepsilon }\left( t\right) \right\Vert _{L^{2}\left( \Lambda
\right) }^{2}\\
+\int_{0}^{t}\int_{\Lambda }\left\langle a_{p_{n}}^{\varepsilon }\left(
\nabla \tilde{X}_{p_{0}}^{\varepsilon }\left( s\right) \right)
-a_{p_{0}}^{\epsilon }\left( \nabla \tilde{X}_{p_{0}}^{\varepsilon }\left(
s\right) \right) ,\nabla \tilde{X}_{p_{n}}^{\varepsilon }\left( s\right)
-\nabla \tilde{X}_{p_{0}}^{\varepsilon }\left( s\right) \right\rangle
_{d}d\xi ds\leq 0.
\end{multline*}

Since 
\begin{eqnarray*}
&&\frac{1}{2}\left\Vert X_{p_{n}}^{\varepsilon }\left( t\right)
-X_{p_{0}}^{\varepsilon }\left( t\right) \right\Vert _{L^{2}\left( \Lambda
\right) }^{2} \\
&\leq &\int_{0}^{t}\int_{\Lambda }\left[ \left( 1-\varepsilon \Delta \right)
^{-1}\operatorname{div}\left( a_{p_{n}}^{\varepsilon }\left( \nabla \tilde{X}%
_{p_{0}}^{\varepsilon }\left( s\right) \right) -a_{p_{0}}^{\epsilon }\left(
\nabla \tilde{X}_{p_{0}}^{\varepsilon }\left( s\right) \right) \right) %
\right] \left[ X_{p_{n}}^{\varepsilon }\left( s\right)
-X_{p_{0}}^{\varepsilon }\left( s\right) \right] d\xi ds \\
&\leq &\left( \int_{0}^{t}\int_{\Lambda }\left( \left( 1-\varepsilon \Delta
\right) ^{-1}\operatorname{div}a_{p_{n}}^{\varepsilon }\left( \nabla \tilde{X}%
_{p_{0}}^{\varepsilon }\left( s\right) \right) -\left( 1-\varepsilon \Delta
\right) ^{-1}\operatorname{div}a_{p_{0}}^{\epsilon }\left( \nabla \tilde{X}%
_{p_{0}}^{\varepsilon }\left( s\right) \right) \right) ^{2}d\xi ds\right)
^{1/2} \\
&&\quad \quad \quad \times \left( \int_{0}^{t}\int_{\Lambda }\left(
X_{p_{n}}^{\varepsilon }\left( s\right) -X_{p_{0}}^{\varepsilon }\left(
s\right) \right) ^{2}d\xi ds\right) ^{1/2}
\end{eqnarray*}

We only need to prove that

\begin{equation*}
\left( \int_{0}^{t}\int_{\Lambda }\left( \left( 1-\varepsilon \Delta \right)
^{-1}\operatorname{div}a_{p_{n}}^{\varepsilon }\left( \nabla \tilde{X}%
_{p_{0}}^{\varepsilon }\left( s\right) \right) -\left( 1-\varepsilon \Delta
\right) ^{-1}\operatorname{div}a_{p_{0}}^{\epsilon }\left( \nabla \tilde{X}%
_{p_{0}}^{\varepsilon }\left( s\right) \right) \right) ^{2}d\xi ds\right)
^{1/2}\rightarrow 0
\end{equation*}%
and that follows from 
\begin{equation}
A_{p_{n}}^{\varepsilon }\left( u\right) \rightarrow A_{p_{0}}^{\varepsilon
}\left( u\right) ,~\text{\ strongly in \thinspace }L^{2}\left( \left(
0,T\right) \times \Lambda \right) ,  \label{A_p_alpha}
\end{equation}%
where $A_{p_{n}}^{\varepsilon }\left( u\right) =\left( 1-\varepsilon \Delta
\right) ^{-1}\operatorname{div}a_{p_{n}}^{\varepsilon }\left( u\right) $ (as in \eqref{approx1}).

Indeed, we obtain \eqref{A_p_alpha} by the following arguments:

Since $a_{p_{n}}^{\varepsilon }\left( u\right) \rightarrow
a_{p_{0}}^{\epsilon }\left( u\right) $ pointwise,
which follows from Lemma \ref{lem2} and \cite[Proposition 3.29]{A},
and since $\left\{
a_{p_{n}}^{\varepsilon }\left( u\right) \right\} _{n}$ is bounded $a.e.$ on $%
\left( 0,T\right) \times \Lambda $ we get by Lebesgue's dominated
convergence theorem%
\begin{equation*}
\left\langle \operatorname{div}a_{p_{n}}^{\varepsilon }\left( u\right) -\operatorname{div}%
a_{p_{0}}^{\epsilon }\left( u\right) ,v\right\rangle _{L^{2}\left( \left(
0,T\right) \times \Lambda \right) }=\left\langle a_{p_{n}}^{\varepsilon
}\left( u\right) -a_{p_{0}}^{\epsilon }\left( u\right) ,\nabla
v\right\rangle _{L^{2}\left( \left( 0,T\right) \times \Lambda \right) }%
\overset{n}{\rightarrow }0,
\end{equation*}%
for all $v\in L^{2}\left( \left( 0,T\right) \times \Lambda \right) $.

That means 
\begin{equation*}
\operatorname{div}a_{p_{n}}^{\varepsilon }\left( u\right) \rightarrow \operatorname{div}%
a_{p_{0}}^{\epsilon }\left( u\right) ,\text{ weakly in }L^{2}\left( \left(
0,T\right) \times \Lambda \right)
\end{equation*}%
and this leads to%
\begin{equation*}
\left( 1-\varepsilon \Delta \right) ^{-1}\operatorname{div}a_{p_{n}}^{\varepsilon
}\left( u\right) \rightarrow \left( 1-\varepsilon \Delta \right) ^{-1}\operatorname{%
div}a_{p_{0}}^{\varepsilon }\left( u\right) ,\text{ strongly in }L^{2}\left(
\left( 0,T\right) \times \Lambda \right) ,
\end{equation*}%
which is (\ref{A_p_alpha}).

We have proved that
\[\lim_n\sup_{t\in [0,T]}\norm{X_n(t)-X_0(t)}_{L^2(\Lambda)}=0\quad\P\text{-a.s.}\]
The convergence
\[\lim_n\E\left[\sup_{t\in [0,T]}\norm{X_n(t)-X_0(t)}_{L^2(\Lambda)}^2\right]=0\]
is established by Lebesgue's dominated convergence theorem and
\cite[Eq. (1.3)]{Liu1}, where the constant can be controlled uniformly in $p$ by
It\={o}'s formula, Poincar\'e inequality and Gr\"{o}nwall's lemma. We
refer to \cite{Tal} for the $p$-dependence of Poincar\'e constants.
\end{proof}

\begin{thm}
Let $\{r_n\}\subset \left(0\vee\frac{d-2}{d+2},1\right]$, $n\in\N$, $r_0\in \left(0\vee\frac{d-2}{d+2},1\right]$ such
that $r_n\to r_0$. Let $Y_n:=Y_{r_n}$, $n\in\N$, $Y_0:=Y_{r_0}$
be the solutions to $(\textup{FD}_{r_n})$, $n\in\N$, $(\textup{FD}_{r_0})$ resp.
Then for $y\in H^{-1}(\Lambda)$,
\[\lim_n\E\left[\sup_{t\in [0,T]}\norm{Y_n(t)-Y_0(t)}_{H^{-1}(\Lambda)}^2\right]=0.\]
\end{thm}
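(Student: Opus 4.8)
The plan is to transcribe the proof of Theorem~\ref{PLthm} into the $H^{-1}(\Lambda)$-variational framework appropriate for the fast diffusion equation. Here the relevant Gelfand triple is $L^{r+1}(\Lambda)\subset H^{-1}(\Lambda)\subset (L^{r+1}(\Lambda))^\ast$, the nonlinearity is the maximal monotone graph $\beta_r(s):=\abs{s}^{r-1}s$ on $\R$, and $(\textup{FD}_r)$ reads $\d Y_r+\partial\varphi_r(Y_r)\,\d t=B\,\d W$ in $H^{-1}(\Lambda)$, where $\varphi_r(y)=\frac{1}{r+1}\int_\Lambda\abs{y}^{r+1}\,\d\xi$ and $\partial\varphi_r(y)=-\Delta\beta_r(y)$. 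The constraint $r>0\vee\frac{d-2}{d+2}$ is exactly the dual Sobolev condition guaranteeing the continuous embedding $L^{r+1}(\Lambda)\hookrightarrow H^{-1}(\Lambda)$ (the counterpart of $p>\frac{2d}{d+2}$, which encodes $W_0^{1,p}(\Lambda)\hookrightarrow L^2(\Lambda)$ in Theorem~\ref{PLthm}); this is what makes $\varphi_r$ proper and coercive on $H^{-1}(\Lambda)$ and, more importantly, what will allow the a priori bounds to be taken uniformly in $r$. Following \eqref{approx1}, I would introduce the smoothed equations $\d Y_r^\eps+A_r^\eps(Y_r^\eps)\,\d t=B\,\d W$, $Y_r^\eps(0)=y$, with $A_r^\eps$ built from the Yosida approximation $\beta_r^\eps=\frac{1}{\eps}(1-(1+\eps\beta_r)^{-1})$ of $\beta_r$ together with the resolvent regularization $(1-\eps\Delta)^{-1}$, and then use the splitting $\norm{Y_n(t)-Y_0(t)}_{H^{-1}(\Lambda)}^2\le I_1(n,\eps)+I_2(n,\eps)+I_3(\eps)$ exactly as in the statement of Theorem~\ref{PLthm}, with $L^2(\Lambda)$ replaced by $H^{-1}(\Lambda)$ throughout.

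The first key step is the analog of Lemma~\ref{apriorilemma}. Applying It\^o's formula to $u\mapsto\norm{u}_{H^{-1}(\Lambda)}^2$ for $Y_r^\eps$ and using the scalar identity $\bracket{\beta_r(s),s}=\abs{s}^{r+1}$ (the counterpart of $\bracket{a_p(r),r}\ge\abs{r}^p$) together with $\bracket{-\Delta\beta_r(u),u}_{H^{-1}(\Lambda)}=\int_\Lambda\beta_r(u)u\,\d\xi$, I expect to obtain $\E\int_0^t\int_\Lambda\abs{j_r^\eps(\tilde Y_r^\eps(s))}^{r+1}\,\d\xi\,\d s\le C_t(\norm{y}_{H^{-1}(\Lambda)}^2+\norm{B}_{HS}^2)$ with $j_r^\eps:=(1+\eps\beta_r)^{-1}$ and $C_t$ independent of $r$ and $\eps$; the required coercivity bound $\bracket{\beta_r^\eps(s),s}=\abs{j_r^\eps(s)}^{r+1}+\eps\abs{\beta_r^\eps(s)}^2\ge\abs{j_r^\eps(s)}^{r+1}$ is obtained from $\beta_r^\eps(s)=\beta_r(j_r^\eps(s))$ and the resolvent identity just as in the proof of Lemma~\ref{apriorilemma}. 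Since $2r\le r+1$ on $(0,1]$, a Jensen/H\"older argument as in \eqref{jenseneq} then upgrades this to the uniform bound $\int_0^t\int_\Lambda\abs{\beta_r^\eps(\cdots)}^2\,\d\xi\,\d s\le C_t$ (the analog of \eqref{bound}), with $C_t$ still independent of $r\in(0\vee\frac{d-2}{d+2},1]$ and $\eps$. Feeding this into the monotonicity estimate \eqref{estim}---whose derivation via $\beta_r^\eps(u)=\beta_r((1+\eps\beta_r)^{-1}u)$ carries over verbatim---yields $\lim_{\eps\to0}\sup_{t\in[0,T]}\norm{Y_r(t)-Y_r^\eps(t)}_{H^{-1}(\Lambda)}^2=0$ uniformly in $r$, so that $I_1(n,\eps)$ and $I_3(\eps)$ vanish as $\eps\downarrow0$ uniformly in $n$.

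For $I_2(n,\eps)$ (fixed $\eps$, $n\to\infty$) I would again use the monotonicity of $A_{r_n}^\eps$ to reduce matters, as in the passage leading to \eqref{A_p_alpha}, to the strong $L^2((0,T)\times\Lambda)$ convergence $A_{r_n}^\eps(u)\to A_{r_0}^\eps(u)$. This in turn follows from the pointwise convergence $\beta_{r_n}^\eps\to\beta_{r_0}^\eps$, which is a consequence of the continuity of $r\mapsto\sgn(s)\abs{s}^r$ and of the stability of Yosida approximations under graph (Mosco) convergence of the $\beta_r$ (cf.\ Lemma~\ref{lem2} and \cite[Proposition~3.29]{A}), combined with an a.e.\ bound and Lebesgue's dominated convergence theorem. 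Collecting the three pieces gives $\P$-a.s.\ convergence of $\sup_{t\in[0,T]}\norm{Y_n(t)-Y_0(t)}_{H^{-1}(\Lambda)}$ to $0$, and the passage to the expectation is then obtained, as in Theorem~\ref{PLthm}, by dominated convergence together with the uniform moment bound from \cite[Eq.~(1.3)]{Liu1}, whose constant is controlled uniformly in $r$ by It\^o's formula, the (dual) Poincar\'e inequality and Gr\"onwall's lemma.

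The main obstacle I anticipate is the uniformity in $r$ of all these bounds near the critical exponent $\frac{d-2}{d+2}$: both the embedding constant of $L^{r+1}(\Lambda)\hookrightarrow H^{-1}(\Lambda)$ and the Poincar\'e-type constant entering the final moment estimate degenerate as $r$ approaches the threshold, so one must exploit that $r_n\to r_0$ stays in a compact subinterval $[\delta,1]\subset(0\vee\frac{d-2}{d+2},1]$ for large $n$ in order to keep all constants uniformly bounded; this is the $H^{-1}(\Lambda)$-analog of the $p$-dependence of the Poincar\'e constants invoked via \cite{Tal} in Theorem~\ref{PLthm}. A secondary technical point is the correct realization of the double smoothing (resolvent plus Yosida) of $-\Delta\beta_r$ inside $H^{-1}(\Lambda)$, so that It\^o's formula and the identity $\bracket{-\Delta\beta_r(u),u}_{H^{-1}(\Lambda)}=\int_\Lambda\beta_r(u)u\,\d\xi$ apply rigorously to the regularized problem and the a priori estimate above closes.
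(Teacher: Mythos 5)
Your proposal is correct and follows essentially the same route as the paper: the paper's own (much terser) proof uses exactly this three-term splitting $I_1+I_2+I_3$ with the Yosida/resolvent regularization of $\beta_r(s)=\abs{s}^{r-1}s$ in the $H^{-1}(\Lambda)$ framework, handling $I_1,I_3$ uniformly in $r$ via the a priori estimates and a Jensen-type inequality (citing \cite{BDP2}, Proposition 2.6) and $I_2$ via pointwise convergence of $\beta_{r_n}$ and resolvent convergence (citing \cite{Ciot}). Your write-up simply supplies the details that the paper delegates to those references.
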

\begin{proof}
We need to show that 
\begin{equation*}
\underset{n}{\lim }\,\mathbb{E}\left[\underset{t\in \left[ 0,T\right] }{\sup }%
\left\Vert Y_{n}\left( t\right) -Y_{0}\left( t\right) \right\Vert^2
_{H^{-1}\left( \Lambda \right) }\right]= 0.
\end{equation*}

Using the same approximation as in \cite{BDP2} consider 
\begin{eqnarray*}
&&\left\Vert Y_{n}\left( t\right) -Y_{0}\left( t\right) \right\Vert
_{H^{-1}\left( \Lambda \right) }\medskip \\
&\leq &\left\Vert Y_{n}\left( t\right) -Y_{n}^{\varepsilon }\left( t\right)
\right\Vert _{H^{-1}\left( \Lambda \right) }+\left\Vert Y_{n}^{\varepsilon
}\left( t\right) -Y_{0}^{\varepsilon }\left( t\right) \right\Vert
_{H^{-1}\left( \Lambda \right) }+\left\Vert Y_{0}^{\varepsilon }\left(
t\right) -Y_{0}\left( t\right) \right\Vert _{H^{-1}\left( \Lambda \right)
}\medskip \\
&=&I_{1}+I_{2}+I_{3}.
\end{eqnarray*}

For $I_{1}$ and $I_{3}$ we have the convergence uniformly in $r_{n}$ for $%
r_{n}>1/2,$ arguing as in \cite{BDP2}, Proposition 2.6 and using at
the end Jensen's inequality for $L^{2}\left( \Lambda\right) \subset
L^{2r_{n}}\left( \Lambda\right) .$

For $I_{2}$ note that the pointwise convergence of $\Psi _{r_{n}}\left(
x\right) =\left\vert x\right\vert ^{r_{n}-1}x$ to $\Psi _{r_{0}}\left(
x\right) =\left\vert x\right\vert ^{r_{0}-1}x$ imply the convergence of the
resolvent in $\mathbb{R}$ and then we get the result arguing as in \cite%
{Ciot}.
\end{proof}

\section{Convergence of invariant measures}\label{invsec}

In this section, we shall present a result on convergence of invariant measures associated
to equations $(\textup{PL}_{p})$, $(\textup{FD}_{r})$ respectively.

Let $\{X^x_p(t)\}_{t\ge 0}$ be the variational solution associated to equation $(\textup{PL}_{p})$
starting at $x\in L^2(\Lambda)$. Similarly, let $\{Y^y_r(t)\}_{t\ge 0}$ be the variational solution associated to equation $(\textup{FD}_{r})$
starting at $y\in H^{-1}(\Lambda)$.

Let
\[P_t^{p}\,F(x):=\E\left[F(X_{p}^x(t))\right],\quad F\in C_b(L^2(\Lambda)),\;\;t\ge 0,\]
be the semigroup
associated to equation $(\textup{PL}_{p})$.

Let
\[Q_t^{r}\,G(y):=\E\left[G(Y_{r}^y(t))\right],\quad G\in C_b(H^{-1}(\Lambda)),\;\;t\ge 0,\]
be the semigroup
associated to equation $(\textup{FD}_{r})$.

Recently, Liu and the second named author obtained the following result:

\begin{prop}\label{LTprop}
Suppose that
$p\in\left(1\vee \frac{2d}{2+d},2\right]$,
$r\in \left(0\vee\frac{d-2}{d+2},1\right]$.
Then $\{P_t^p\}$ and $\{Q_t^r\}$ are ergodic and admit unique invariant measures $\mu_p$, $\nu_r$
respectively. It holds that $\mu_p$ is supported by $W^{1,p}_0(\Lambda)$ and $\nu_r$ is supported by $L^{r+1}(\Lambda)$. Also
 \begin{equation}\label{mubound} \int_{L^2(\Lambda)}\|x\|_{1,p}^p \,\mu_p(d x)<+\infty,\end{equation}
and
 \begin{equation}\label{mubound2} \int_{H^{-1}(\Lambda)}\|y\|_{r+1}^{r+1} \,\nu_r(d y)<+\infty.\end{equation}
\end{prop}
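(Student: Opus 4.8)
The plan is to prove existence, the support and moment statements, then uniqueness, and finally to read off ergodicity; I treat $(\textup{PL}_p)$ in $L^2(\Lambda)$ and $(\textup{FD}_r)$ in $H^{-1}(\Lambda)$ in parallel, since both are additive-noise equations driven by a coercive maximal monotone drift ($A_p$, resp. $N_r u:=-\Delta(|u|^{r-1}u)$). A unifying observation is that the two parameter thresholds are \emph{exactly} the ranges for which the relevant Sobolev embeddings hold and are compact: $p>\tfrac{2d}{2+d}$ is equivalent to $W^{1,p}_0(\Lambda)\hookrightarrow L^2(\Lambda)$, while $r>\tfrac{d-2}{d+2}$ is equivalent to $L^{r+1}(\Lambda)\hookrightarrow H^{-1}(\Lambda)$ (both compact in the strict range).

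\emph{Existence and moment bounds.} First I would run a Krylov--Bogoliubov argument. It\^o's formula applied to $\norm{X^x_p(t)}_{L^2(\Lambda)}^2$, using that the $(W^{1,p}_0)^\ast$--$W^{1,p}_0$ duality pairing satisfies $\langle A_p u,u\rangle=\norm{\nabla u}^p_{L^p(\Lambda)}$, gives $\tfrac{d}{dt}\E\norm{X^x_p(t)}_{L^2(\Lambda)}^2=-2\,\E\norm{\nabla X^x_p(t)}^p_{L^p(\Lambda)}+\norm{B}_{HS}^2$; integrating in time yields the uniform-in-$T$ bound
\[
\frac1T\int_0^T\E\norm{\nabla X^x_p(s)}^p_{L^p(\Lambda)}\,ds\le\frac{\norm{x}_{L^2(\Lambda)}^2}{2T}+\frac12\norm{B}_{HS}^2.
\]
Because $W^{1,p}_0(\Lambda)\hookrightarrow L^2(\Lambda)$ is compact, this makes the time-averaged laws $\tfrac1T\int_0^T\P\circ(X^x_p(s))^{-1}\,ds$ tight on $L^2(\Lambda)$, and any weak limit point is an invariant measure $\mu_p$. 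Lower semicontinuity of $u\mapsto\norm{\nabla u}^p_{L^p(\Lambda)}$ under weak convergence, together with invariance, transfers the averaged gradient bound to $\int\norm{\nabla x}^p_{L^p(\Lambda)}\,\mu_p(dx)\le\tfrac12\norm{B}_{HS}^2$, which is both the support statement $\mu_p(W^{1,p}_0(\Lambda))=1$ and \eqref{mubound}. The case of $(\textup{FD}_r)$ is identical after replacing $A_p$ by $N_r$, the coercivity identity by $\langle N_r u,u\rangle_{H^{-1}(\Lambda)}=\int_\Lambda|u|^{r+1}\,d\xi$, and the compact embedding by $L^{r+1}(\Lambda)\hookrightarrow H^{-1}(\Lambda)$; this gives $\nu_r(L^{r+1}(\Lambda))=1$ and \eqref{mubound2}.

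\emph{Uniqueness and ergodicity.} The decisive structural feature is that the noise is additive and identical for two solutions started at different points, so it cancels in the difference: $Z(t):=X^x_p(t)-X^{x'}_p(t)$ solves, pathwise, the \emph{deterministic} equation $\dot Z=-(A_pX^x_p-A_pX^{x'}_p)$, and by monotonicity $t\mapsto\norm{Z(t)}_{L^2(\Lambda)}$ is nonincreasing. The aim is to upgrade this to $\norm{Z(t)}_{L^2(\Lambda)}\to0$, since then, testing two invariant measures $\mu,\mu'$ against a Lipschitz $F$,
\[
\int F\,d\mu-\int F\,d\mu'=\int\!\!\int\E\bigl[F(X^x_p(t))-F(X^{x'}_p(t))\bigr]\,\mu(dx)\,\mu'(dx')\xrightarrow[t\to\infty]{}0,
\]
forcing $\mu=\mu'$; uniqueness of the invariant measure then yields ergodicity of $\{P^p_t\}$ (and likewise of $\{Q^r_t\}$) in the standard way. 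For a single solution the singular exponents already produce strict dissipativity: Sobolev--Poincar\'e gives $\langle A_pu,u\rangle\ge c\norm{u}^p_{L^2(\Lambda)}$ and $\langle N_ru,u\rangle_{H^{-1}(\Lambda)}\ge c\norm{u}^{r+1}_{H^{-1}(\Lambda)}$ with exponents $p,\,r+1<2$, so the noiseless flows even become extinct in finite time. I would transfer this to the two-point difference by means of the refined monotonicity of $a_p$ (for $1<p<2$),
\[
\bigl\langle a_p(\xi)-a_p(\eta),\xi-\eta\bigr\rangle\,(|\xi|+|\eta|)^{2-p}\ge c_p\,|\xi-\eta|^2,
\]
combined with H\"older's inequality, Sobolev--Poincar\'e, and the a priori gradient bounds, to obtain a Gr\"onwall-type decay for $\norm{Z(t)}_{L^2(\Lambda)}$, and analogously for $(\textup{FD}_r)$.

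\emph{The main obstacle} is precisely this last transfer. The monotonicity of the singular nonlinearities \emph{degenerates for large arguments}: the estimate above carries the weight $(|\xi|+|\eta|)^{-(2-p)}$ (and $\beta_r(s)=|s|^{r-1}s$ flattens as $|s|\to\infty$), so neither exponential nor finite-time decay of the \emph{difference} is automatic, in contrast to the single-solution extinction. Closing the argument requires quantifying that $\int_0^\infty\bigl(\norm{\nabla X^x_p(s)}_{L^p(\Lambda)}+\norm{\nabla X^{x'}_p(s)}_{L^p(\Lambda)}\bigr)^{-(2-p)}\,ds=\infty$ $\P$-a.s. by feeding back the integrated a priori bounds, and verifying that all constants are uniform in the singular parameter (so that the comparison survives $p_n\to p_0$, $r_n\to r_0$). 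The remaining measurability and coupling bookkeeping in the two-measure comparison is routine once the decay is in hand.
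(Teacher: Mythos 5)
The first thing to note is that the paper does not prove this proposition at all: its ``proof'' is the single line ``See \cite[Propositions 3.2 and 3.4]{LiuToe}'', so the real comparison is between your outline and that reference. Your existence half is sound and matches what is done there: It\^o's formula for the squared norm, the coercivity identities $\langle A_p u,u\rangle=\|\nabla u\|_{L^p}^p$ and $\langle N_r u,u\rangle_{H^{-1}}=\int_\Lambda|u|^{r+1}\,d\xi$, Krylov--Bogoliubov with tightness supplied by the compact embeddings $W^{1,p}_0(\Lambda)\subset\subset L^2(\Lambda)$ and $L^{r+1}(\Lambda)\subset\subset H^{-1}(\Lambda)$ (which is exactly what the thresholds $p>\frac{2d}{2+d}$ and $r>\frac{d-2}{d+2}$ guarantee), and weak lower semicontinuity to transfer the time-averaged bound to the invariant measure, yielding the support statements and \eqref{mubound}, \eqref{mubound2}. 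You also correctly identify the mechanism for uniqueness (additive noise cancels in the difference, weak dissipativity of $a_p$ and of $s\mapsto|s|^{r-1}s$), and uniqueness does imply ergodicity since a unique invariant measure is an extreme point of the set of invariant measures. A minor remark: uniformity of constants in $p$ and $r$ is irrelevant for this proposition, which concerns fixed parameters; that uniformity only matters for Theorem \ref{measureconvergence}.

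The genuine gap is the one you flag yourself: you reduce uniqueness to the claim that $\int_0^\infty g(s)\,ds=\infty$ $\P$-a.s., where $g(s)=\bigl(1+\|\nabla X^x_p(s)\|_{L^p}^p+\|\nabla X^{x'}_p(s)\|_{L^p}^p\bigr)^{-(2-p)/p}$, but you do not prove it, and this reduction together with its resolution is precisely the content of \cite{LiuToe}. The step does close, and here is how. The weighted monotonicity inequality for $a_p$, a reverse H\"older argument, and the Sobolev embedding $W^{1,p}_0(\Lambda)\hookrightarrow L^2(\Lambda)$ give $\frac{d}{dt}\|Z(t)\|_{L^2}^2\le -c\,g(t)\,\|Z(t)\|_{L^2}^2$, hence $\|Z(t)\|_{L^2}^2\le\|Z(0)\|_{L^2}^2\exp\bigl(-c\int_0^t g(s)\,ds\bigr)$. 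Writing $h(s)$ for the quantity inside the parentheses defining $g$ and $q=(2-p)/p$, convexity of $t\mapsto t^{-q}$ and Jensen's inequality give $\int_0^T g(s)\,ds\ge T\bigl(\frac1T\int_0^T h(s)\,ds\bigr)^{-q}$; the It\^o a priori bound gives $\E\int_0^T h(s)\,ds\le C(1+T)$, so by Fatou's lemma $\liminf_{T\to\infty}\frac1T\int_0^T h(s)\,ds<\infty$ $\P$-a.s., and evaluating along a sequence $T_k$ realizing this $\liminf$ shows $\int_0^{T_k}g\to\infty$, hence $\int_0^\infty g=\infty$ a.s. since the integral is monotone in $T$. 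Without this (or an equivalent) argument your proposal establishes existence but not uniqueness, and therefore not ergodicity either; with it, your outline is a faithful reconstruction of the cited proof.
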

\begin{proof}
See \cite[Propositions 3.2 and 3.4]{LiuToe}.
\end{proof}

\begin{thm}\label{measureconvergence}
\begin{enumerate}[(i)]
\item
Let $\{p_n\}\subset\left(1\vee \frac{2d}{2+d},2\right]$, $n\in\N$, $p_0\in\left(1\vee \frac{2d}{2+d},2\right]$ such
that $p_n\to p_0$. Set $P_t^n:=P_t^{p_n}$, $P_t^0:=P_t^{p_0}$.

Then the unique invariant measures $\mu_n$, $n\in\N$, $\mu_0$ resp. associated to $\{P_t^n\}$, $n\in\N$, $\{P_t^0\}$ converge in the weak sense, i.e.
\[\lim_n\int_{L^2(\Lambda)}F(x)\,\mu_{n}(dx)=\int_{L^2(\Lambda)}F(x)\,\mu_{0}(dx)\quad\forall F\in C_b(L^2(\Lambda)).\]

\item
Let $\{r_n\}\subset \left(0\vee\frac{d-2}{d+2},1\right]$, $n\in\N$, $r_0\in \left(0\vee\frac{d-2}{d+2},1\right]$ such
that $r_n\to r_0$. Set $Q_t^n:=Q_t^{r_n}$, $Q_t^0:=Q_t^{r_0}$.

Then the unique invariant measures $\nu_n$, $n\in\N$, $\nu_0$ resp. associated to $\{Q_t^n\}$, $n\in\N$, $\{Q_t^0\}$ converge in the weak sense, i.e.
\[\lim_n\int_{L^2(\Lambda)}F(x)\,\nu_{n}(dx)=\int_{L^2(\Lambda)}F(x)\,\nu_{0}(dx)\quad\forall F\in C_b(L^2(\Lambda)).\]
\end{enumerate}
\end{thm}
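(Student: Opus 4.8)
The plan is to run the standard two-step argument for convergence of invariant measures. First I would show that the family $\{\mu_n\}_{n\in\N}$ is tight in $L^2(\Lambda)$, so that by Prokhorov's theorem it is relatively compact for the weak topology. Second, I would prove that \emph{any} weak subsequential limit $\mu_\ast$ of $\{\mu_n\}$ is an invariant measure for $\{P_t^0\}$; since by Proposition \ref{LTprop} the invariant measure $\mu_0$ of $\{P_t^0\}$ is unique, this forces $\mu_\ast=\mu_0$. As every subsequence then admits a further subsequence converging weakly to the \emph{same} limit $\mu_0$, the whole sequence converges, which is assertion~(i).

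For tightness, the key is a moment bound uniform in $n$. Applying It\^o's formula for $\norm{\cdot}_{L^2(\Lambda)}^2$ to the stationary solution started from $\mu_n$ and using $\bracket{A_{p_n}(x),x}=\int_\Lambda\abs{\nabla x}^{p_n}\,\d\xi$, the stationarity of $\mu_n$ makes the time-derivative of $\E\norm{\cdot}_{L^2(\Lambda)}^2$ vanish and yields the balance $\int_{L^2(\Lambda)}\norm{\nabla x}_{L^{p_n}}^{p_n}\,\mu_n(\d x)=\tfrac12\norm{B}_{HS}^2$, a bound \emph{independent of} $n$ (this is the mechanism behind \eqref{mubound}). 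Now fix $p_-\in(1\vee\tfrac{2d}{d+2},p_0)$ with $p_n\ge p_-$ for all large $n$. On the bounded domain $\Lambda$, H\"older's inequality gives $\norm{\nabla x}_{L^{p_-}}\le\abs{\Lambda}^{1/p_--1/p_n}\norm{\nabla x}_{L^{p_n}}$, and since $a^{p_-}\le 1+a^{p_n}$ for $a\ge 0$, the uniform bound transfers to $\sup_n\int\norm{\nabla x}_{L^{p_-}}^{p_-}\,\mu_n(\d x)=:C<\infty$. Because $p_->\tfrac{2d}{d+2}$, the embedding $W_0^{1,p_-}(\Lambda)\hookrightarrow L^2(\Lambda)$ is compact, so each set $K_R:=\{x:\norm{\nabla x}_{L^{p_-}}\le R\}$ has compact $L^2(\Lambda)$-closure; Chebyshev's inequality then gives $\mu_n(L^2(\Lambda)\setminus\ol{K_R})\le C\,R^{-p_-}$ uniformly in $n$, which is tightness.

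For the identification step, fix $t\ge 0$ and a bounded Lipschitz function $F$, and recall the invariance identity $\int P_t^n F\,\d\mu_n=\int F\,\d\mu_n$. Suppose $\mu_{n_k}\to\mu_\ast$ weakly. I would pass to the limit on both sides: the right-hand side converges to $\int F\,\d\mu_\ast$ by weak convergence. For the left-hand side I split it as $\int(P_t^{n_k}F-P_t^0 F)\,\d\mu_{n_k}+\int P_t^0 F\,\d\mu_{n_k}$. The second summand converges to $\int P_t^0 F\,\d\mu_\ast$, since $P_t^0 F\in C_b(L^2(\Lambda))$: indeed, additivity of the noise and monotonicity of $A_{p_0}$ give the pathwise contraction $\norm{X_0^x(t)-X_0^{x'}(t)}_{L^2(\Lambda)}\le\norm{x-x'}_{L^2(\Lambda)}$, so $P_t^0 F$ is Lipschitz and bounded. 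For the first summand I combine (i) the pointwise convergence $P_t^{n}F\to P_t^0 F$, which follows from Theorem \ref{PLthm} via $\abs{P_t^nF(x)-P_t^0F(x)}\le\Lip(F)\,(\E\norm{X_n^x(t)-X_0^x(t)}_{L^2(\Lambda)}^2)^{1/2}\to 0$, with (ii) the fact that $\{P_t^nF\}_n$ is equi-Lipschitz with constant $\Lip(F)$, by the same contraction now applied uniformly in $n$. Equicontinuity together with pointwise convergence yields uniform convergence on the compact sets supplied by tightness, so the first summand tends to $0$. Hence $\int P_t^0F\,\d\mu_\ast=\int F\,\d\mu_\ast$ for every bounded Lipschitz $F$; as such functions are measure-determining, $\mu_\ast$ is $\{P_t^0\}$-invariant, and uniqueness gives $\mu_\ast=\mu_0$.

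Part~(ii) is entirely analogous, carried out in $H^{-1}(\Lambda)$: the stationary energy balance reads $\int\norm{y}_{r_n+1}^{r_n+1}\,\nu_n(\d y)=\tfrac12\norm{B}_{HS}^2$ uniformly in $n$ (behind \eqref{mubound2}); the assumption $r_n>\tfrac{d-2}{d+2}$ is precisely what makes $L^{r_-+1}(\Lambda)\hookrightarrow H^{-1}(\Lambda)$ compact (dualize the compact embedding $H_0^1(\Lambda)\hookrightarrow L^{(r_-+1)'}(\Lambda)$), so tightness follows as before; the monotonicity of $y\mapsto-\Delta(\abs{y}^{r-1}y)$ in $H^{-1}(\Lambda)$ furnishes the contraction needed for the Feller property and for the equicontinuity; and the pointwise convergence of $Q_t^nG$ comes from the companion convergence theorem for $(\textup{FD}_{r})$. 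The main obstacle in both parts is securing tightness \emph{uniformly} in $n$: the available moment control is over the $n$-dependent norm $\norm{\cdot}_{1,p_n}$, and one must simultaneously check that its constant does not degenerate as $p_n\to p_0$ (handled by the $p$-independent stationary identity, cf.\ the remark on Poincar\'e constants and \cite{Tal}) and convert that control into compactness in the fixed space $L^2(\Lambda)$ (resp.\ $H^{-1}(\Lambda)$) through a single subcritical exponent $p_-$ (resp.\ $r_-$).
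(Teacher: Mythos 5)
Your proposal is correct, and the tightness half is essentially the paper's argument: both rest on the uniform moment bound \eqref{mubound}--\eqref{mubound2} coming from the stationary It\^o balance, converted into compactness in $L^2(\Lambda)$ (resp.\ $H^{-1}(\Lambda)$) through a single subcritical exponent below all the $p_n$ (the paper takes $p_1=\inf_n p_n$ where you take $p_-$, and absorbs the exponent mismatch via $a^{p_1}\le |\Lambda|+a^{p_n}$ much as you do). The identification step, however, is genuinely different. You prove that every weak subsequential limit is an invariant measure of $\{P_t^0\}$, using the pathwise contraction from additive noise and monotonicity to get the Feller property and equi-Lipschitz continuity of $\{P_t^nF\}_n$, hence uniform convergence on the compacts supplied by tightness; uniqueness then pins down the limit. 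The paper instead invokes the ergodicity from Proposition \ref{LTprop} via Krylov--Bogoliubov, writing $\int F\,d\mu_{n_k}=\lim_{T\to\infty}\frac1T\int_0^T P_t^{n_k}F(x)\,dt$ for a fixed starting point and comparing the Ces\`aro averages directly with those of $P_t^0$ using Theorem \ref{PLthm} and dominated convergence. Your route uses less (only uniqueness and the Feller contraction, not convergence of time averages) and works with a fixed $t$, thereby avoiding the interchange of the limits $T\to\infty$ and $k\to\infty$ that is implicit in the paper's claim $\eps_k\to 0$; the price is the extra equicontinuity bookkeeping needed to pass to the limit in $\int(P_t^{n_k}F-P_t^0F)\,d\mu_{n_k}$, which the paper's fixed-$x$ time-average formulation sidesteps entirely. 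Both arguments are sound and both transfer to part~(ii) exactly as you describe.
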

\begin{proof}
Let us prove (i) first.
By Proposition \ref{LTprop},
we see that $\{P_t^n\}$, $n\in\N$, $\{P_t^0\}$ admit unique invariant measures
$\mu_n$, $n\in\N$, $\mu_0$ resp. Let $p_1:=\inf_n p_n$. By the convergence $p_n\to p_0$,
$p_1\in\left(1\vee \frac{2d}{2+d},2\right]$ and the embedding $W^{1,p_1}_0(\Lambda)\subset L^2(\Lambda)$ is compact.

Let $\theta>0$. Set
\[K_\theta:=\left\{x\in L^2(\Lambda)\;\bigg\vert\;\norm{x}_{1,p_1}^{p_1}\le\theta^{-1}+\abs{\Lambda}\right\}.\]
Clearly, $K_\theta$ is compact in $L^2(\Lambda)$. Now by \eqref{mubound},
\begin{multline*}\mu_n\{K_\theta^{\textup{c}}\}=\mu_n\left\{\norm{\cdot}_{1,p_1}^{p_1}-\abs{\Lambda}\ge\theta^{-1}\right\}
\le\theta\int_{L^2(\Lambda)}\norm{x}_{1,p_n}^{p_n}\,\mu_n(dx)
\le \theta\norm{B}^2_{HS}.\end{multline*}

Hence the family of measures $\{\mu_n\}_{n\in\N}$ is tight and has a weak
accumulation point $\tilde{\mu}$, i.e. $\mu_{n_k}\to\tilde{\mu}$ weakly. By the Krylov--Bogoliubov theorem, for $F\in C_b(L^2(\Lambda))$,
\begin{align*}
\int_{L^2(\Lambda)}F(x)\,\mu_{n_k}(dx)
=&\lim_{T\to+\infty}\frac{1}{T}\int_0^T P_t^{n_k}F(x)\,dt\\
=&\lim_{T\to+\infty}\frac{1}{T}\int_0^T(P_t^{n_k}F(x)-P_t^{0}F(x))\,dt\\
&+\lim_{T\to+\infty}\frac{1}{T}\int_0^T P_t^{0}F(x)\,dt\\
=:&\eps_k+\int_{L^2(\Lambda)}F(x)\,\mu_0(dx)
\end{align*}
By Theorem \ref{PLthm} and dominated convergence, $\eps_k\to 0$ as $k\to+\infty$ and hence
\[\int_{L^2(\Lambda)}F(x)\,\tilde{\mu}(dx)=\int_{L^2(\Lambda)}F(x)\,\mu_0(dx).\]
As a consequence, for the whole sequence, $\mu_n\to\mu_0$ weakly.

The proof for (ii) can be carried out by similar arguments.
\end{proof}

\section{The case $p=1$}\label{singsec}

For $p=1$, the situation is more complicated. We would like to find a convex functional $\Phi^1$ such that the stochastic $1$-Laplace equation
$$\label{1sp}
(\textup{PL}_1)\left\{
\begin{aligned}
\d X_{1}\left( t\right)&=\operatorname{div}\left[\dfrac{\nabla X_{1}(t)}{\lrabs{\nabla X_{1}(t)}}\right]\d t+B\,\d W\left( t\right) \quad&\text{in~}(0,T) \times\Lambda, \\
X_{1}\left( t\right)&=0\quad&\text{on~}\left( 0,T\right) \times \partial \Lambda, \\ 
X_{1}\left( 0\right)&=x\quad&\text{in~}\Lambda,%
\end{aligned}
\right.$$
can be written as
\begin{equation}\label{equ4}
\left\{
\begin{aligned}
\d X_1\left( t\right)&\in -\partial\Phi^1(X_1(t))\,\d t+B\,\d W\left( t\right) \quad&\text{in~}(0,T), \\
X_1\left( 0\right)&=x,\quad&%
\end{aligned}
\right.
\end{equation}%
where $\partial\Phi^1$ is the subdifferential of $\Phi^1$.

We shall need the spaces $BV(\Lambda)$ and $BV(\R^d)$. For $f\in L^1_\loc(\Lambda)$, define the \emph{total variation}
\[
\left\Vert\D f\right\Vert(\Lambda) =\sup \left\{ \int_{\Lambda}f\operatorname{div}\psi\,
\d\xi\;\Big\vert\;\psi \in C_{0}^{\infty }\left(\Lambda;\mathbb{R}^{d}\right),\;\left\vert \psi \right\vert \leq 1\right\}
\]%
$BV(\Lambda)$ is defined to be equal to $\{f\in L^1(\Lambda)\,\vert\,\norm{\D f}(\Lambda)<\infty\}$.
Denote the $d-1$-dimensional Hausdorff measure on $\partial\Lambda$ by $\Hscr^{d-1}$.
For $f\in BV(\Lambda)$ there is an element $f^\Lambda\in L^1(\partial\Lambda,\d\Hscr^{d-1})$ called
the \emph{trace} such that
\[\int_\Lambda f\operatorname{div}\psi\,\d\xi=-\int_\Lambda\lrbr{\psi}{\d[\D f]}+\int_{\partial\Lambda}\lrbr{\psi}{\nu} f^\Lambda\,\d\Hscr^{d-1}
 \quad\forall\psi\in C^1(\ol{\Lambda};\R^d),
\]
where $[\D f]$ denotes the distributional gradient of $f$ on $\Lambda$ (which is a $\R^d$-valued Radon measure here)
and $\nu$ denotes the outer unit normal on $\partial\Lambda$.
$BV(\R^d)$ is defined similarly by setting $\Lambda=\R^d$. Define also $\norm{\D f}(\R^d)$
in the above manner. Note that for $f\in BV(\Lambda)$ (extended by zero outside $\Lambda$) it holds
that $f\in BV(\R^d)$ and that
\begin{equation}\label{eq:Rdtrace}\norm{\D f}(\R^d)=\norm{\D f}(\Lambda)+\int_{\partial\Lambda}\lrabs{f^\Lambda}\,d\Hscr^{d-1},\end{equation}
cf. \cite[Theorem 3.87]{AFP}.
\begin{rem}\label{rem:contcomp}
By Ambrosio et al. \cite[Corollary 3.49]{AFP}, if $d\in\{1,2\}$, then
\[W^{1,1}_0(\Lambda)\subset BV(\Lambda)\subset L^2(\Lambda)\]
continuously. If $d=1$, then
\[BV(\Lambda)\subset\subset L^2(\Lambda)\]
compactly.
\end{rem}
For further results in spaces of functions of bounded variation, we refer to \cite[Ch. 3]{AFP}.

We shall return to equation \eqref{equ4}.
Recall that the subdifferential $\partial\Phi^1$ in $L^2(\Lambda)$ is defined by $\eta\in\partial\Phi^1(x)$ iff
\begin{equation}
\Phi^1 \left( x\right) -\Phi^1 \left( y\right) \leq \int_{\Lambda%
}\eta \left( x-y\right)\,\d\xi ,\quad\forall y\in\dom\Phi^1.
\end{equation}

One possible choice for $\Phi^1$ is the (homogeneous) energy
\[\tilde{\Phi}(u):=\begin{cases}\int_\Lambda\abs{\nabla u}\,\d\xi,\quad&\text{if~}u\in W^{1,1}_0(\Lambda),\\
            +\infty,\quad&\text{if~}u\in L^2(\Lambda)\setminus W^{1,1}_0(\Lambda).
           \end{cases}
\]
In this case, if $u\in W^{1,1}_0(\Lambda)$, and if
$U:=-\operatorname{div}(\sgn(\nabla u))\subset L^2(\Lambda)$, then we have that $u\in\dom\partial\tilde{\Phi}$ and
$U=\partial\tilde{\Phi}(u)$.

However, $\tilde{\Phi}$ fails to be lower semi-continuous in $L^2(\Lambda)$ which is a necessary ingredient
for the theory. Therefore, it is convenient to consider its relaxed functional in $L^2(\Lambda)$, which is equal to
\[\Phi^1(u):=\begin{cases}\norm{\D u}(\R^d),\quad&\text{if~}u\in BV(\Lambda),\\
            +\infty,\quad&\text{if~}u\in L^2(\Lambda)\setminus BV(\Lambda),
           \end{cases}
\]
see equation \eqref{eq:Rdtrace} above.
$\Phi^1$ is proper, convex and lower semi-continuous in $L^2(\Lambda)$ and an extension of
$\tilde{\Phi}$ in the sense that $\dom\Phi^1\supset\dom\tilde{\Phi}$ and $\Phi^1\le\tilde{\Phi}$.
Compare with \cite{ACM,KaSchu,Schu,Toe3}.

Following the approach of Barbu, Da Prato and R\"ockner \cite{BDPR}, we shall give the definition of a solution
for equations $(\textup{PL}_p)$, $p\in[1,2]$.

\begin{defi}\label{soldef}
Set $V_p:=W^{1,p}_0(\Lambda)$, $p\in (1,2]$, $V_1:=BV(\Lambda)$.
Let $\Phi^1$ be defined as above. For $p\in (1,2]$, let
\[\Phi^p(x):=\begin{cases}\dfrac{1}{p}\displaystyle\int_\Lambda\abs{\nabla x}^p\,\d\xi,\quad&\text{if~}u\in W^{1,p}_0(\Lambda),\\
            +\infty,\quad&\text{if~}u\in L^2(\Lambda)\setminus W^{1,p}_0(\Lambda).
           \end{cases}\]

A stochastic process $X=X^x $ with $\mathbbm{P}$-a.s.
continuous sample paths in $H:=L^{2}\left( \Lambda\right) $ is said to be
a solution to equation $(\textup{{PL}}_p)$, $p\in[1,2]$ if%
\[
X\in C_{W}\left( \left[ 0,T\right] ;H\right) \cap L^{p}\left( \left(
0,T\right) \times \Omega ,V_p \right) ,\quad
X\left( 0\right) =x\in H
\]%
and%
\begin{eqnarray*}
&&\frac{1}{2}\left\Vert X\left( t\right) -Y\left( t\right) \right\Vert
^{2}_{L^2(\Lambda)}+\int_{0}^{t}\left( \Phi^p \left( X\left( s\right) \right) -\Phi^p \left(
Y\left( s\right) \right) \right)\,\d s \\
&\leq &\frac{1}{2}\left\Vert x-Y\left( 0\right) \right\Vert
^{2}_{L^2(\Lambda)}+\int_{0}^{t}\left( G\left( s\right) ,X\left( s\right) -Y\left( s\right)
\right)_{L^2(\Lambda)}\,\d s,\quad t\in \left[ 0,T\right] ,
\end{eqnarray*}%
for all $G \in L_{W}^{2}\left( 0,T;H\right) $ and $Y\in
C_{W}\left( \left[ 0,T\right] ;H\right) \cap L^{p}\left( \left( 0,T\right)
\times \Omega ;V_p \right) $ satisfying the
equation 
\begin{equation}\label{Gequ}
\d Y\left( t\right) +G\left( t\right)\,\d t=B\,\d W\left( t\right) ,\quad t\in %
\left[ 0,T\right] .
\end{equation}
\end{defi}

Suppose for a while that $1<p<2$, $d=1,2$.
Arguing as in \cite[Example 4.1.9, Theorem 4.2.4]{PrRoe}, we can easily prove
existence and uniqueness of the solution $X_{p}$ for equation $(\text{PL}_p)$,
in the usual (strong) variational sense, as in Pardoux, Krylov, Rozovski\u{\i} \cite{KrRo0,Pard}. 
We shall refer to
Pr\'ev\^ot, R\"ockner \cite[Definition 4.2.1]{PrRoe}.
By It\={o}'s formula, we
see that $X_p$ is also a solution in the sense of the definition above.

Here, $W(t)$ is a cylindrical Wiener process on $L^2(\Lambda)$ of the form
\[W(t)=\sum_{n=1}^\infty\gamma_n(t)e_n,\quad t\ge 0,\]
where $\{\gamma_n\}$ is a sequence of mutually independent real Brownian motions on a filtered probability space
$(\Omega,\Fscr,\{\Fscr_t\}_{t\ge 0},\mathbbm{P})$ and $\{e_n\}$ is an orthonormal basis of $L^2(\Lambda)$.
We shall make further specifications. $BB^\ast$ is assumed to be a linear, continuous, non-negative, symmetric operator on $L^2(\Lambda)$
with eigenbasis $\{e_n\}$ and corresponding sequence of eigenvalues $\{\lambda_n\}$.
Let $(-\Delta,\dom(-\Delta))$ be the Dirichlet Laplacian in $L^2(\Lambda)$, in particular,
$\dom(-\Delta)=H^2(\Lambda)\cap H_0^1(\Lambda)$. Assume for simplicity that $\{e_n\}$ is an eigenbasis of
$-\Delta$ with corresponding sequence of eigenvalues $\{\mu_n\}$. We shall assume that
\begin{equation}
\sum_{n=1}^\infty\lambda_n^{1+\kappa}\mu_n<\infty
\end{equation}
for some $\kappa>0$. For the situation considered in this paper, it is enough to set $Q:=(-\Delta)^{-1-\delta}$
with $\delta>\frac{1}{2}+\kappa$ for $d=1$ and $\delta>1+\kappa$ for $d=2$.

Regarding equation $(\text{PL}_1)$, well-posedness of the problem as well as existence
and uniqueness of the solution were proved by Barbu, Da Prato and R\"ockner in \cite{BDPR}.

\begin{rem}\label{rem:BV}
Note that in \cite{BDPR}, the space $BV_0(\Lambda)$ is introduced, consisting of $BV(\Lambda)$-functions with zero trace. They claim, however, that the energy
\[\Psi(u):=\begin{cases}\norm{\D u}(\Lambda),\quad&\text{if~}u\in BV_0(\Lambda),\\
            +\infty,\quad&\text{if~}u\in L^2(\Lambda)\setminus BV_0(\Lambda).
           \end{cases}
\]
is lower semi-continuous which is not the case. Consider, for example, a sequence $u_n$ of trace zero Lipschitz functions on $\Lambda$ with $\norm{\D u_n}(\Lambda)=1$ converging in $L^2(\Lambda)$ to $\mathbbm{1}_\Lambda$. Then
\[\varliminf_n\Psi(u_n)=1< +\infty=\Psi(\mathbbm{1}_\Lambda).\]
Fortunately, all results of \cite{BDPR} remain true, if one replaces $\Psi$ (denoted by $\Phi$ in their paper) by $\Phi^1$. We do not repeat the steps taken in the proof of \cite{BDPR} here,
but note that for their existence and uniqueness result relies on an approximation $\{\Psi^\eps\}$ of $\Psi$ which
``does not see'' the trace-term in \eqref{eq:Rdtrace}, i.e. maps $L^2(\Lambda)$ functions on a joint subspace
of $BV_0(\Lambda)$ and $\dom(\Phi^1)$. In fact, $\{\Psi^\eps\}$ is defined similarly to \eqref{eq:approxdefi}.
\end{rem}

Other results of stochastic evolution variational inequalities can be found in
\cite{BDP3,BenRas,Ras2,Ras3,Ras}.

We are now able to formulate the main result of this section.

\begin{thm}\label{mainresult}
Let $d\in\{1,2\}$.
The sequence of solutions $\left\{ X_{p}\right\} _{p}$ to equations $(\textup{\text{PL}}_p)$
is convergent for $p\rightarrow 1$ to the solution $X_1$ of equation
$(\textup{\text{PL}}_1)$, strongly in $L^{2}\left( \Lambda\right)$, uniformly on
$\left[ 0,T\right]$, $\mathbbm{P}-a.s.$, i.e.,%
\[
\lim_{p\to 1}\sup_{t\in [0,T]}
\left\Vert X_{p}\left( t\right) -X_1\left( t\right) \right\Vert _{L^2(\Lambda)}=0,\quad 
\mathbb{P}-a.s.
\]
\end{thm}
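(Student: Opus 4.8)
The plan is to characterize each $X_p$, $p\in[1,2]$, through the stochastic evolution variational inequality of Definition \ref{soldef}, driven by the proper, convex, lower semi-continuous functionals $\Phi^p$ on $H:=L^2(\Lambda)$ with the \emph{additive}, $p$-independent noise $B\,\d W$. Because the noise does not depend on $p$, I would first remove it: writing $Z(t):=\int_0^t B\,\d W(s)$, which under the standing assumptions on $B$ and $Q$ has $\mathbb{P}$-a.s. continuous paths in $H$, the process $V_p:=X_p-Z$ solves, for $\mathbb{P}$-almost every fixed $\omega$, the \emph{pathwise} deterministic evolution variational inequality associated with $\Phi^p$ and the continuous forcing $Z(\cdot,\omega)$ (this is precisely the reduction underlying the construction in \cite{BDPR}). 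The statement then follows from two ingredients: (a) Mosco convergence $\Phi^p\to\Phi^1$ in $H$ as $p\downarrow 1$, and (b) the classical stability of deterministic evolution variational inequalities governed by subdifferentials under Mosco convergence of the energies.

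For (a), I would verify the two defining conditions of Mosco convergence in $H$; the relevant background is collected in the Appendix and in \cite{A}. The \emph{lower bound} (weak liminf) condition requires that $u_p\rightharpoonup u$ weakly in $H$ implies $\liminf_{p\downarrow 1}\Phi^p(u_p)\ge\Phi^1(u)$: a uniform bound on $\Phi^p(u_p)=\tfrac1p\int_\Lambda\abs{\nabla u_p}^p\,\d\xi$ forces $\{u_p\}$ to be bounded in $W^{1,p}_0(\Lambda)$ and, after extending by zero to $\R^d$, to have uniformly bounded total variation $\norm{\D u_p}(\R^d)$; lower semi-continuity of the total variation together with $\tfrac1p\to 1$ then yields the bound against $\Phi^1(u)=\norm{\D u}(\R^d)$. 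The \emph{recovery} (strong limsup) condition requires, for each $u\in BV(\Lambda)$, a sequence $u_p\to u$ strongly in $H$ with $\limsup_{p\downarrow 1}\Phi^p(u_p)\le\Phi^1(u)$; I would build it from a strict $BV$-approximation of $u$ by zero-trace $W^{1,p}_0(\Lambda)$ functions, using \eqref{eq:Rdtrace} to account for the boundary term. This deterministic Mosco convergence of the $p$-energies to the relaxed total-variation functional is exactly the statement underlying \cite{GKY,Toe3}, and I would quote it from there.

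For (b), I would invoke the fact that Mosco convergence of the convex energies $\Phi^p\to\Phi^1$ is equivalent to graph (resolvent) convergence of the subdifferentials $\partial\Phi^p\to\partial\Phi^1$, and hence to convergence of the associated nonlinear semigroups and, more generally, of the solutions of the evolution inequalities with a common continuous forcing and fixed initial datum $x$ (Br\'ezis--Attouch-type stability; see \cite{A}). Applied pathwise with forcing $Z(\cdot,\omega)$ and datum $x$, this gives $V_p\to V_1$ in $C([0,T];H)$ for $\mathbb{P}$-almost every $\omega$, whence $X_p=V_p+Z\to V_1+Z=X_1$ strongly in $L^2(\Lambda)$, uniformly on $[0,T]$, $\mathbb{P}$-a.s. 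The limit is the solution $X_1$ of $(\text{PL}_1)$ because its energy is $\Phi^1$ and its solution is unique \cite{BDPR}; uniqueness also upgrades the subsequential limit to convergence of the whole net $p\downarrow 1$. Uniform (in $p$) a priori estimates — obtained, as in Lemma \ref{apriorilemma}, from It\={o}'s formula and the coercivity $\langle a_p(r),r\rangle\ge\abs{r}^p$ — provide the compactness needed to extract the limit and to control the pathwise forcing near the singular endpoint; here the restriction $d\in\{1,2\}$ is used through Remark \ref{rem:contcomp} to guarantee the embeddings $W^{1,1}_0(\Lambda)\subset BV(\Lambda)\subset L^2(\Lambda)$ and (for $d=1$) compactness.

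The main obstacle is the correct treatment of the boundary trace in the Mosco limit, the subtlety already flagged in Remark \ref{rem:BV}: the approximating energies live on the zero-trace spaces $W^{1,p}_0(\Lambda)$, whereas a generic $u\in BV(\Lambda)$ has nonzero trace $u^\Lambda$, and the relaxed limit $\Phi^1$ charges precisely the penalty $\int_{\partial\Lambda}\abs{u^\Lambda}\,\d\Hscr^{d-1}$ through the extension \eqref{eq:Rdtrace}. Verifying that the recovery sequence reproduces this boundary contribution (and that the liminf inequality is not lost at $\partial\Lambda$) is the technical heart of the argument; it is exactly what forces the use of $\Phi^1$ rather than the non-lower-semicontinuous functional $\Psi$ of \cite{BDPR}. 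A secondary point is checking that the a priori bounds are genuinely uniform as $p\downarrow 1$, so that the extracted limit remains well controlled at the singular endpoint $p=1$.
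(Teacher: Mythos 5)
Your overall strategy differs from the paper's, and it contains a genuine gap at its load-bearing step (b). Subtracting the noise does not produce a problem to which the classical Br\'ezis--Attouch stability theory applies: if $Z(t)=\int_0^tB\,\d W$, then $V_p:=X_p-Z$ satisfies (formally) $V_p'(t)\in-\partial\Phi^p\bigl(V_p(t)+Z(t)\bigr)$, i.e.\ the path $Z(\cdot,\omega)$ enters \emph{inside} the subdifferential as a time-dependent shift of its argument, not as an additive forcing $f\in L^1(0,T;H)$ on the right-hand side. The stability results in \cite{A} that you invoke concern $u_n'+\partial\Phi^n(u_n)\ni f_n$ with $f_n\to f$ in $L^1$ or $L^2(0,T;H)$ (equivalently, convergence of the semigroups $e^{-t\partial\Phi^n}$), and one cannot transform the shifted problem into that form without differentiating $Z$. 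What you would actually need is a Mosco-stability theorem for time-dependent subdifferential evolutions $v'+\partial\Phi_t(v)\ni 0$ whose time dependence is controlled only by the modulus of continuity of a Brownian path (in particular, not of bounded variation in $t$); the available theory of such equations (Kenmochi-type conditions) requires an absolutely continuous or BV modulation of the energies, which fails here. This is precisely the obstruction that motivates the variational-inequality solution concept of Definition \ref{soldef}: the forcing is absorbed by testing against processes $Y$ solving \eqref{Gequ}, not by a pathwise reduction. A secondary problem is that the a priori bounds place $V_p'$ only in $L^{p'}(0,T;(W^{1,p}_0)^\ast)$, not in $L^1(0,T;L^2(\Lambda))$, so even the $H$-valued absolute continuity needed to run the deterministic subdifferential calculus pathwise is not available.

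The paper's proof is built to avoid exactly this. It uses the three-term splitting $\lVert X_p-X_1\rVert\le\lVert X_p-X_p^\eps\rVert+\lVert X_p^\eps-X_1^\eps\rVert+\lVert X_1^\eps-X_1\rVert$, where the $\eps$-superscripts denote the Yosida/resolvent-regularized equations \eqref{approx3}, \eqref{approx2}: the first and third terms are controlled uniformly in $p$ (resp.\ by \cite{BDPR}), and for fixed $\eps$ the middle term is handled by writing the variational inequality for $X_p^\eps$ tested against $X_1^\eps$ and vice versa, then applying reverse Fatou (Lemma \ref{fatoulem}) together with Mosco convergence of the \emph{regularized} energies $\Phi^p_\eps(u)=\int_\Lambda j^p_\eps(\nabla R_\eps u)\,\d\xi$ (Lemma \ref{philem}). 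Since $R_\eps$ maps $L^2(\Lambda)$ into $H^1_0(\Lambda)\cap H^2(\Lambda)$, these regularized functionals see no boundary trace, so the delicate Mosco convergence $\Phi^p\xrightarrow{M}\Phi^1$ with the trace penalty in \eqref{eq:Rdtrace} --- which you correctly identify as hard and propose to import from \cite{GKY,Toe3} --- is never actually needed; only the elementary Mosco convergence of the integrands on $L^2(\Lambda;\R^d)$ (Lemma \ref{lem2}) is. If you want to salvage your route, you would have to either prove the missing time-dependent stability theorem or reorganize the argument around the regularized equations as the paper does.
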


There is some evidence that the following conjecture is true,
see \cite{ESvR,GesToe,KPS}.

\begin{conj}\label{1c}
Let $d\in\{1,2\}$.
Then the semigroup
\[P_t^1 F(x):=\mathbbm{E}\left[F\left(X_1(t,x)\right)\right],\quad F\in C_b(L^2(\Lambda)),\]
admits a unique invariant measure $\mu_1$.
\end{conj}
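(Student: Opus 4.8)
The plan is to treat existence and uniqueness of $\mu_1$ separately via the Krylov--Bogoliubov method together with the \emph{e-property}. The argument will go through cleanly for $d=1$; the genuine difficulty, and the reason the statement is only conjectural, is concentrated in the two-dimensional case, as explained at the end.

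\emph{Existence.} First I would establish a Lyapunov-type energy bound for $X_1=X_1(\cdot,x)$. Since the variational-inequality solution is only $BV(\Lambda)$-valued, I would not apply It\^o's formula to it directly, but instead derive the estimate on the regularised level $(\textup{PL}_p)$, $p\in(1,2]$, where It\^o's formula holds in the strong variational sense and $\lrbr{a_p(\nabla u)}{\nabla u}=\abs{\nabla u}^p=p\,\Phi^p(u)$ gives
\[
\E\norm{X_p(t)}_{L^2(\Lambda)}^2+2p\,\E\int_0^t\Phi^p(X_p(s))\,\d s\le\norm{x}_{L^2(\Lambda)}^2+t\,\norm{B}_{HS}^2.
\]
Passing to the limit $p\to 1$ via Theorem \ref{mainresult} (convergence $X_p\to X_1$ in $L^2(\Lambda)$), the $\liminf$-inequality of the Mosco convergence $\Phi^p\to\Phi^1$, and Fatou's lemma, I would obtain
\[
\E\norm{X_1(t)}_{L^2(\Lambda)}^2+2\,\E\int_0^t\Phi^1(X_1(s))\,\d s\le\norm{x}_{L^2(\Lambda)}^2+t\,\norm{B}_{HS}^2.
\]
Dividing by $t$ and using $\Phi^1\ge 0$ yields, for $t\ge 1$, a bound $\int\Phi^1\,\d R_t(x,\cdot)\le C_x$ on the time averages $R_t(x,\cdot):=\tfrac1t\int_0^t P_s^1(x,\cdot)\,\d s$ that is uniform in $t$. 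For $d=1$, Remark \ref{rem:contcomp} shows that sublevel sets of $\Phi^1$ are bounded in $BV(\Lambda)$ and hence precompact in $L^2(\Lambda)$, so Chebyshev's inequality makes $\{R_t(x,\cdot)\}_{t\ge1}$ tight; a weak limit point $\mu_1$ is then invariant and, by the energy bound, satisfies $\int\Phi^1\,\d\mu_1<\infty$, i.e. is supported on $BV(\Lambda)$.

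\emph{Uniqueness.} The second ingredient is pathwise non-expansiveness. Subtracting the inequalities of Definition \ref{soldef} for two solutions $X_1(\cdot,x)$, $X_1(\cdot,y)$, the additive martingale $B\,\d W$ cancels and monotonicity of $\partial\Phi^1$ gives $\norm{X_1(t,x)-X_1(t,y)}_{L^2(\Lambda)}\le\norm{x-y}_{L^2(\Lambda)}$ $\P$-a.s. Hence $\abs{P_t^1F(x)-P_t^1F(y)}\le\Lip(F)\,\norm{x-y}_{L^2(\Lambda)}$ for Lipschitz $F$, so $\{P_t^1F\}_{t\ge0}$ is equicontinuous, which is exactly the e-property of \cite{KPS}. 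I would combine this with irreducibility --- which should follow from the non-degeneracy of $Q=(-\Delta)^{-1-\delta}$ (all eigenvalues positive) by a support/control argument --- and invoke the criterion of Komorowski--Peszat--Szarek \cite{KPS}: a Feller semigroup with the e-property satisfying a lower-bound (concentration) condition admits at most one invariant measure. The existence step supplies recurrence to a fixed compact set and irreducibility supplies the lower bound, yielding uniqueness; this is the strategy underlying \cite{ESvR,GesToe}.

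\emph{Main obstacle.} The crux is tightness for $d=2$, where the embedding $BV(\Lambda)\subset L^2(\Lambda)$ of Remark \ref{rem:contcomp} is only continuous, not compact (it is compact into $L^q(\Lambda)$ only for $q<2=\tfrac{d}{d-1}$). Thus the energy bound no longer localises the time averages in a precompact subset of $L^2(\Lambda)$ and Krylov--Bogoliubov breaks down. Since the total variation flow does not regularise $BV$ data into any $W^{1,q}(\Lambda)$ with $q>1$, no compactness can be recovered from smoothing of the drift, and the additive noise alone does not place $\mu_1$ in a space compactly embedded in $L^2(\Lambda)$. A viable route would be to first construct an invariant measure in the weaker $L^q(\Lambda)$-topology ($q<2$), where $BV$ embeds compactly, and then to upgrade integrability and convergence back to $L^2(\Lambda)$; alternatively, an asymptotic-coupling argument in the spirit of \cite{ESvR} that bypasses $L^2$-tightness. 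A further essential difficulty is that $\Phi^1$ is positively $1$-homogeneous and hence not strictly convex, which rules out the exponential contraction $\E\norm{X_1(t,x)-X_1(t,y)}_{L^2(\Lambda)}^2\le e^{-\omega t}\norm{x-y}_{L^2(\Lambda)}^2$ that would otherwise give ergodicity directly; this is precisely why the softer e-property machinery is unavoidable.
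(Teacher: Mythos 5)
This statement is a \emph{conjecture} in the paper: there is no proof to compare against, and the authors say explicitly (introduction to Section \ref{singsec}) that uniqueness of the invariant measure for $p=1$ is an open question, pointing to \cite{ESvR,GesToe,KPS} for evidence. So the question is whether your sketch actually closes the problem, and it does not. The decisive gap is in the uniqueness half. The Komorowski--Peszat--Szarek criterion \cite{KPS} requires, besides the e-property, a uniform lower-bound condition --- roughly, a point $z$ such that $\liminf_{T\to\infty}\tfrac{1}{T}\int_0^T P_s^1\mathbbm{1}_{B(z,\delta)}(x)\,\d s>0$ for every $x$ and every $\delta>0$ --- and you supply this by asserting that irreducibility ``should follow from the non-degeneracy of $Q$ by a support/control argument.'' For the multivalued $1$-Laplace drift no such support theorem or controllability result is available off the shelf: the drift is merely monotone, not strictly monotone (as you yourself note, $\Phi^1$ is positively $1$-homogeneous); the noise is trace-class with $B$ non-invertible, so Girsanov-type irreducibility arguments do not apply; and the deterministic total variation flow exhibits finite-time extinction rather than controllability. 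This lower-bound step is precisely the mathematical content of the open problem, and asserting it is the gap --- it is exactly where the subsequent literature \cite{ESvR,GesToe} has to do genuinely new work.

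Two secondary points. First, your derivation of pathwise non-expansiveness is not licit as stated: in Definition \ref{soldef} the comparison process $Y$ must satisfy \eqref{Gequ} with some $G\in L_W^2(0,T;H)$, and a second VI-solution $X_1(\cdot,y)$ does not come equipped with such a $G$ (the subdifferential $\partial\Phi^1$ need not admit an $L^2$-valued section along the solution). The correct route is to prove $\left\Vert X_p(t,x)-X_p(t,y)\right\Vert_{L^2(\Lambda)}\le\left\Vert x-y\right\Vert_{L^2(\Lambda)}$ for $p>1$ from monotonicity of $a_p$ and pass to the limit via Theorem \ref{mainresult}; this repairs the e-property, so the flaw is fixable. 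Second, your existence step invokes a Mosco $\liminf$-inequality for $\Phi^p\to\Phi^1$ on $L^2(\Lambda)$, but the paper's appendix only establishes Mosco convergence of the gradient functionals $\Psi^p$ on $L^2(\Lambda;\R^d)$ and convergence of the regularized $\Phi^p_\eps$ (Lemma \ref{philem}); the unregularized statement, with the relaxed limit $\Phi^1$ including the boundary trace term of \eqref{eq:Rdtrace}, must be imported from \cite{Toe3,GKY}. Your Krylov--Bogoliubov existence argument for $d=1$ is otherwise sound and consistent with Remark \ref{rem:contcomp}, and your diagnosis of the $d=2$ obstruction (no compact embedding $BV(\Lambda)\subset L^2(\Lambda)$) is correct --- it matches the fact that the conditional Theorem \ref{invthm1} is stated only for $d=1$ --- but existence for $d=1$ was never the conjectural part; uniqueness is, and it remains unproven in your sketch.
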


\begin{thm}\label{invthm1}
Let $d=1$. Suppose that Conjecture \ref{1c} is true.
Let $X_p=X_p(t,x)$ be the solution to equation $(\textup{\text{PL}}_p)$, $p\in [1,2]$.
Let $\{p_n\}\subset (1,2]$ such that $\lim_n p_n=1$. Let
\[P_t^p F(x):=\mathbbm{E}\left[F\left(X_p^x(t)\right)\right],\quad\phi\in C_b(L^2(\Lambda)),\]
be the semigroup
associated to equation $(\textup{\text{PL}}_p)$. Let
$\mu_{p_n}$, $n\in\N$, $\mu_{1}$ be the associated unique invariant measures on $L^2(\Lambda)$. Then
\[\mu_{p_n}\to\mu_{1}\quad\text{in the weak sense}.\]
\end{thm}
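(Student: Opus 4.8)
The plan is to follow the architecture of the proof of Theorem~\ref{measureconvergence}, with two substitutions dictated by the singular limit: the compact embedding $W^{1,p_1}_0(\Lambda)\subset L^2(\Lambda)$ is replaced by $BV(\Lambda)\subset\subset L^2(\Lambda)$, which is available only for $d=1$ (Remark~\ref{rem:contcomp}), and Theorem~\ref{PLthm} is replaced by the singular-limit convergence Theorem~\ref{mainresult}. Concretely I would (i) prove that $\{\mu_{p_n}\}_n$ is tight in $L^2(\Lambda)$, (ii) extract by Prokhorov a weakly convergent subsequence $\mu_{p_{n_k}}\to\tilde\mu$, (iii) identify $\tilde\mu$ with $\mu_1$ by combining the Krylov--Bogoliubov theorem with the $\P$-a.s.\ convergence of solutions from Theorem~\ref{mainresult}, and (iv) invoke the assumed uniqueness of $\mu_1$ (Conjecture~\ref{1c}) to upgrade subsequential to full-sequence convergence.

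For the tightness in step (i), the natural compactness functional is $\Phi^1$ itself. Since each $\mu_{p_n}$ is supported on $W^{1,p_n}_0(\Lambda)$ by Proposition~\ref{LTprop}, every $x$ in the support has vanishing trace, so by \eqref{eq:Rdtrace} one has $\Phi^1(x)=\norm{\D x}(\R^d)=\int_\Lambda\abs{\nabla x}\,\d\xi$. Using H\"older's inequality on the bounded domain $\Lambda$ and Jensen's inequality for the concave map $t\mapsto t^{1/p_n}$, together with the uniform energy bound $\int_{L^2(\Lambda)}\norm{x}_{1,p_n}^{p_n}\,\mu_{p_n}(dx)\le\norm{B}_{HS}^2$ already used in the proof of Theorem~\ref{measureconvergence}, I would obtain
\[
\int_{L^2(\Lambda)}\Phi^1(x)\,\mu_{p_n}(dx)\le\abs{\Lambda}^{(p_n-1)/p_n}\left(\int_{L^2(\Lambda)}\norm{x}_{1,p_n}^{p_n}\,\mu_{p_n}(dx)\right)^{1/p_n}\le\abs{\Lambda}^{(p_n-1)/p_n}\left(\norm{B}_{HS}^2\right)^{1/p_n}=:M_n.
\]
Because $p_n\to 1$ with $p_n\in(1,2]$, the right-hand side stays bounded, so $M:=\sup_n M_n<\infty$ (the $\abs{\Lambda}$-factor plays the role of the $+\abs{\Lambda}$ offset appearing in Theorem~\ref{measureconvergence}). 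Setting $K_\theta:=\{x\in L^2(\Lambda):\Phi^1(x)\le\theta^{-1}\}$, lower semicontinuity of $\Phi^1$ makes $K_\theta$ closed, and since $d=1$ a $\Phi^1$-bound is a bound on $\norm{\D x}(\R^d)$, hence, via a Poincar\'e estimate controlling $\norm{x}_{L^1(\Lambda)}$, a bound in $BV(\Lambda)$; by Remark~\ref{rem:contcomp} the embedding $BV(\Lambda)\subset\subset L^2(\Lambda)$ is compact, so $K_\theta$ is compact in $L^2(\Lambda)$. Markov's inequality then gives $\mu_{p_n}(K_\theta^{\textup{c}})\le\theta\int\Phi^1\,d\mu_{p_n}\le\theta M$ uniformly in $n$, which is tightness.

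For step (iii), I would argue exactly as in Theorem~\ref{measureconvergence}: for $F\in C_b(L^2(\Lambda))$ the Krylov--Bogoliubov theorem and unique ergodicity give, for every starting point $x$,
\[
\int_{L^2(\Lambda)}F\,d\mu_{p_{n_k}}=\lim_{T\to\infty}\frac{1}{T}\int_0^T P_t^{p_{n_k}}F(x)\,dt=\eps_k+\int_{L^2(\Lambda)}F\,d\mu_1,
\]
where $\eps_k:=\lim_{T\to\infty}\frac1T\int_0^T\bigl(P_t^{p_{n_k}}F(x)-P_t^1F(x)\bigr)\,dt$. By Theorem~\ref{mainresult}, $\sup_{t\in[0,T]}\norm{X_{p_{n_k}}^x(t)-X_1^x(t)}_{L^2(\Lambda)}\to 0$ $\P$-a.s.; since $F$ is bounded and continuous and the limiting path $\{X_1^x(t):t\in[0,T]\}$ is a.s.\ compact in $L^2(\Lambda)$, dominated convergence yields $P_t^{p_{n_k}}F(x)\to P_t^1F(x)$ and hence $\eps_k\to 0$. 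Combined with $\int F\,d\mu_{p_{n_k}}\to\int F\,d\tilde\mu$ from weak convergence, this forces $\int F\,d\tilde\mu=\int F\,d\mu_1$ for all $F\in C_b(L^2(\Lambda))$, i.e.\ $\tilde\mu=\mu_1$. Uniqueness of $\mu_1$ (Conjecture~\ref{1c}) then shows every tight subsequential limit equals $\mu_1$, whence $\mu_{p_n}\to\mu_1$ weakly.

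The main obstacle is step (i): one must extract a compactness bound that survives the singular limit $p_n\downarrow 1$. This is exactly where $d=1$ enters and cannot be relaxed to $d=2$ --- only for $d=1$ is $BV(\Lambda)\subset\subset L^2(\Lambda)$ compact (Remark~\ref{rem:contcomp}), whereas for $d=2$ the embedding is merely continuous and the sublevel sets $K_\theta$ fail to be $L^2$-compact. A secondary technical point is the interchange of the limits $T\to\infty$ and $k\to\infty$ implicit in $\eps_k\to 0$; a cleaner variant that avoids it is to pass to the limit directly in the stationarity identity $\int P_t^{p_{n_k}}F\,d\mu_{p_{n_k}}=\int F\,d\mu_{p_{n_k}}$, using the Feller property of $(P_t^1)$ --- which follows from the $L^2$-contractivity $\norm{X_1^x(t)-X_1^{x'}(t)}_{L^2(\Lambda)}\le\norm{x-x'}_{L^2(\Lambda)}$ of the total variation flow --- together with tightness to show that $\tilde\mu$ is $(P_t^1)$-invariant, and then concluding again by Conjecture~\ref{1c}.
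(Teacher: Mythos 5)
Your proposal is correct and follows essentially the same route as the paper, whose proof simply instructs the reader to repeat the argument of Theorem \ref{measureconvergence} with $W^{1,p_1}_0(\Lambda)$ replaced by $BV(\Lambda)$, using the compactness of the embedding $BV(\Lambda)\subset L^2(\Lambda)$ for $d=1$ from Remark \ref{rem:contcomp} and Theorem \ref{mainresult} in place of Theorem \ref{PLthm}. Your H\"older/Jensen derivation of the uniform $\Phi^1$-moment bound and your remark on the interchange of the limits $T\to\infty$ and $k\to\infty$ merely supply details that the paper leaves implicit.
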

\begin{proof}
Note that by Remark \ref{rem:contcomp}, the embedding $BV(\Lambda)\subset L^2(\Lambda)$ is compact.
The proof is similar to that of Theorem \ref{measureconvergence}, $W^{1,p_1}_0(\Lambda)$ therein
replaced by $BV(\Lambda)$.
\end{proof}

\begin{proof}[Proof of Theorem \ref{mainresult}]

For each $\eps>0$, let $R_\eps:=(1-\eps\Delta)^{-1}$ be the resolvent of the (Dirichlet) Laplace operator $(-\Delta,\dom(-\Delta))$,
where $\dom(-\Delta)=H^{1}_0(\Lambda)\cap H^{2}(\Lambda)$.
For $p\in [1,2]$, $\eps>0$, let
\begin{equation}\label{eq:approxdefi}\Phi_\eps^p(u):=\int_\Lambda j_\eps^p(\nabla R_\eps u)\,\d\xi,\quad u\in L^2(\Lambda).\end{equation}

\begin{lem}\label{philem}
Let $\{p_n\}\subset[1,2]$ such that $\lim_n p_n=1$. Let $\eps>0$.
Then for $u\in L^2(\Lambda)$, we have that
\begin{equation}\label{regconveq3}
\lim_n\Phi_\eps^{p_n}(u)=\Phi_\eps^1(u).
\end{equation}
Furthermore, if $u_n\rightharpoonup u$ converges weakly in $L^2(\Lambda)$,
we have that
\begin{equation}\label{regconveq4}
\liminf_n\Phi_\eps^{p_n}(u_n)\ge\Phi_\eps^1(u).
\end{equation}\label{lsceq}
Also, each $\Phi_\eps^p$, $p\in [1,2]$, $\eps>0$, is continuous w.r.t. the weak topology of $L^2(\Lambda)$.
\end{lem}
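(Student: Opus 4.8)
The plan is to lean on two structural facts and then reduce everything to them. First, the smoothing operator $R_\eps=(1-\eps\Delta)^{-1}$ maps $L^2(\Lambda)$ into $\dom(-\Delta)=H^2(\Lambda)\cap H^1_0(\Lambda)$, so $\nabla R_\eps\colon L^2(\Lambda)\to L^2(\Lambda;\R^d)$ factors through $H^1(\Lambda;\R^d)$ and is therefore a \emph{compact} linear operator by Rellich--Kondrachov. Second, the Moreau--Yosida approximant $j_\eps^p$ has gradient $\nabla j_\eps^p=a_p^\eps$, the Yosida approximation of $a_p$, which is $1/\eps$-Lipschitz with $a_p^\eps(0)=0$; hence $\abs{a_p^\eps(r)}\le\abs{r}/\eps$ \emph{uniformly in} $p\in[1,2]$. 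Integrating $\nabla j_\eps^p$ along the segment $[r',r]$ gives the uniform local Lipschitz bound
\[
\abs{j_\eps^p(r)-j_\eps^p(r')}\le\tfrac1\eps\,(\abs{r}+\abs{r'})\,\abs{r-r'},\qquad p\in[1,2].
\]
These two inputs are all I expect to need.

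For the pointwise statement \eqref{regconveq3}, I fix $u\in L^2(\Lambda)$ and set $w:=\nabla R_\eps u\in L^2(\Lambda;\R^d)$. Since $j_\eps^p(0)=0$, the fundamental theorem of calculus gives $j_\eps^p(r)=\int_0^1\langle a_p^\eps(tr),r\rangle\,\d t$. The pointwise convergence $a_{p_n}^\eps\to a_1^\eps$ (exactly as used in the proof of Theorem \ref{PLthm}, now with limiting parameter $p=1$, via Lemma \ref{lem2} and \cite[Proposition 3.29]{A}), together with the domination $\abs{a_{p_n}^\eps(tr)}\le\abs{r}/\eps$, yields $j_\eps^{p_n}(r)\to j_\eps^1(r)$ for every $r\in\R^d$ by dominated convergence in $t$. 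Applying this at $r=w(\xi)$ for a.e.\ $\xi$, and using the domination $0\le j_\eps^{p_n}(w)\le\tfrac1{p_n}\abs{w}^{p_n}\le 1+\abs{w}^2\in L^1(\Lambda)$, a second application of dominated convergence on $\Lambda$ gives $\Phi_\eps^{p_n}(u)=\int_\Lambda j_\eps^{p_n}(w)\,\d\xi\to\int_\Lambda j_\eps^1(w)\,\d\xi=\Phi_\eps^1(u)$.

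For \eqref{regconveq4} and the weak continuity I would prove the stronger statement that $\Phi_\eps^{p_n}(u_n)\to\Phi_\eps^1(u)$ whenever $u_n\rightharpoonup u$ in $L^2(\Lambda)$. By compactness of $\nabla R_\eps$, the weak convergence $u_n\rightharpoonup u$ forces strong convergence $w_n:=\nabla R_\eps u_n\to w:=\nabla R_\eps u$ in $L^2(\Lambda;\R^d)$. The uniform Lipschitz bound then gives
\[
\int_\Lambda\abs{j_\eps^{p_n}(w_n)-j_\eps^{p_n}(w)}\,\d\xi\le\tfrac1\eps\bigl(\norm{w_n}_{L^2(\Lambda)}+\norm{w}_{L^2(\Lambda)}\bigr)\norm{w_n-w}_{L^2(\Lambda)}\longrightarrow 0,
\]
since $\norm{w_n}_{L^2(\Lambda)}$ stays bounded; combined with $\int_\Lambda j_\eps^{p_n}(w)\,\d\xi\to\int_\Lambda j_\eps^1(w)\,\d\xi$ from the previous paragraph (applied to the fixed element $u$), this yields $\Phi_\eps^{p_n}(u_n)\to\Phi_\eps^1(u)$, and in particular the $\liminf$ inequality \eqref{regconveq4}. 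Taking the constant sequence $p_n\equiv p$ gives $\Phi_\eps^p(u_n)\to\Phi_\eps^p(u)$, i.e.\ the sequential continuity of each $\Phi_\eps^p$ with respect to the weak topology of $L^2(\Lambda)$.

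I expect the only genuine obstacle to be the compactness of $\nabla R_\eps$: this is precisely what upgrades weak convergence of $u_n$ to strong convergence of the regularized gradients and is the reason the regularized functionals behave so well. The remaining difficulty is bookkeeping, namely ensuring the gradient bound $\abs{a_p^\eps(r)}\le\abs{r}/\eps$ is genuinely uniform in $p\in[1,2]$ so that the two limiting procedures ($p_n\to 1$ and $u_n\rightharpoonup u$) decouple as above; once this is in place the envelope convergence in \eqref{regconveq3} is routine.
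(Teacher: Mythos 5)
Your proof is correct, and it takes a genuinely different route from the paper's. The paper deduces \eqref{regconveq3} and \eqref{regconveq4} from the appendix machinery: Lemma \ref{lem2} (Mosco convergence of $\Psi^{p_n}$ in $L^2(\Lambda;\R^d)$), Theorem \ref{Moscothm} and Corollary \ref{cor1} (stability of Mosco convergence under Moreau--Yosida regularization and its equivalence with pointwise convergence of the regularizations), and Lemma \ref{lem1} (identification of $\Psi^p_\eps$ as the integral of $j^p_\eps$), which together give \eqref{regconveq1}--\eqref{regconveq2}; the only analytic input beyond that is the same compactness of $\nabla R_\eps$ you use. You instead replace the abstract variational-convergence results by explicit estimates: the uniform bound $\abs{a_p^\eps(r)}\le\abs{r}/\eps$ (valid since $a_p^\eps$ is the $1/\eps$-Lipschitz Yosida approximation with $a_p^\eps(0)=0$), the resulting uniform local Lipschitz bound on $j_\eps^p$, and two applications of dominated convergence. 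This buys you a strictly stronger conclusion --- continuous convergence $\Phi_\eps^{p_n}(u_n)\to\Phi_\eps^1(u)$ along weakly convergent sequences, of which \eqref{regconveq4} and the (sequential) weak continuity are immediate corollaries --- and it makes the lemma essentially self-contained modulo the pointwise convergence $a_{p_n}^\eps(r)\to a_1^\eps(r)$ on $\R^d$. Note that this last fact is the one place where you have not fully escaped the Attouch machinery: you justify it, as the paper does in the proof of Theorem \ref{PLthm}, via Lemma \ref{lem2} and \cite[Proposition 3.29]{A}, so the dependence on variational convergence is localized to the finite-dimensional level rather than eliminated (alternatively, one could verify it by hand from the explicit formula for the resolvent of $a_p$, or obtain $j^{p_n}_\eps(r)\to j^1_\eps(r)$ directly from locally uniform convergence of $j^{p_n}$ to $j^1$ in the infimal convolution). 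Both proofs establish only sequential weak continuity in the last assertion, which is all that is used later, so no discrepancy arises there.
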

\begin{proof}
Since $R_\eps$ maps to $\dom(-\Delta)\subset H^{1}_0(\Lambda)$, it is clear that
$\nabla R_\eps u\in L^2(\Lambda;\R^d)$ and hence \eqref{regconveq3} follows from \eqref{regconveq1}.

Let $u_n\in L^2(\Lambda)$, $n\in\N$, $u\in L^2(\Lambda)$, such that $u_n\rightharpoonup u$ weakly in $L^2(\Lambda)$.
If we can proof that $\nabla R_\eps u_n\rightharpoonup\nabla R_\eps u$ weakly in $L^2(\Lambda;\R^d)$, we
can apply \eqref{regconveq2} and \eqref{lsceq} follows. Indeed, we even have that $\nabla R_\eps u_n\to\nabla R_\eps u$
strongly in $L^2(\Lambda;\R^d)$.

The last part follows by repeating the compactness argument above and the strong $L^2(\Lambda;\R^d)$-continuity of the $\Psi^p_\eps$'s.
\end{proof}

We first consider the following approximating equations for $(\text{PL}_p)$
\begin{equation} \label{approx3}
\left\{ 
\begin{aligned}
\d X_{p}^{\varepsilon }\left( t\right) +A_{p}^{\varepsilon }\left(
X_{p}^{\varepsilon }\right) \d t&=B\,\d W\left( t\right) \\ 
X_{p}^{\varepsilon }\left( 0\right) &=x%
\end{aligned}%
\right. 
\end{equation}%
where for any $u\in L^2(\Lambda)$,
\begin{equation*}
A_{p}^{\varepsilon }\left( u\right) =-\left( 1-\varepsilon \Delta \right)
^{-1}\operatorname{div}\left[ a_{p}^{\varepsilon }\left( \nabla \left( 1-\varepsilon
\Delta \right) ^{-1}u\right) \right]
\end{equation*}%
and $a_{p}^{\varepsilon }$ is the Yosida approximation of $a_{p}$ i.e., for any $r\in\R^d$,%
\begin{equation*}
a_{p}^{\varepsilon }\left( r\right) =\frac{1}{\varepsilon }\left( 1-\left(
1+\varepsilon a_{p}\right) ^{-1}\left( r\right) \right).
\end{equation*}%
In particular, for $u,v\in L^2(\Lambda)$,
\[\left(A_p^\eps(u),v\right)_{L^2(\Lambda)}=\int_\Lambda\lrbr{a_p^\eps(\nabla R_\eps u)}{\nabla R_\eps(v)}\,\d\xi.\]

We shall consider a similar approximation for equation $(\text{PL}_1)$ 
\begin{equation}
\left\{ 
\begin{aligned}
\d X^{\varepsilon }_1\left( t\right) +A^{\varepsilon }\left( X^{\varepsilon
}_1\right) \d t&=B\,\d W\left( t\right) \\ 
X^{\varepsilon }_1\left( 0\right)&=x%
\end{aligned}%
\right.  \label{approx2}
\end{equation}%
where for any $u\in L^2(\Lambda)$,
\begin{equation*}
A^{\varepsilon }\left( u\right) =-\left( 1-\varepsilon \Delta \right) ^{-1}%
\operatorname{div}\left[ \beta ^{\varepsilon }\left( \nabla \left( 1-\varepsilon
\Delta \right) ^{-1}u\right) \right].
\end{equation*}%
with%
\begin{equation*}
\beta ^{\varepsilon }\left( r\right) =\left\{ 
\begin{aligned}
\dfrac{r}{\varepsilon },&\text{ if }\left\vert r\right\vert \leq
\varepsilon , \\ 
\dfrac{r}{\left\vert r\right\vert },&\text{ if }\left\vert r\right\vert
>\varepsilon .%
\end{aligned}%
\right.
\end{equation*}%
In particular, for $u,v\in L^2(\Lambda)$,
\[\left(A^\eps(u),v\right)_{L^2(\Lambda)}=\int_\Lambda\lrbr{\beta^\eps(\nabla R_\eps u)}{\nabla R_\eps(v)}\,\d\xi.\]

Note that $\beta ^{\varepsilon }$ is the Yosida approximation of the sign function,
i.e., for any $r\in\R^d$,
\begin{equation*}
\beta ^{\varepsilon }\left( r\right) =\frac{1}{\varepsilon }\left( 1-\left(
1+\varepsilon ~\operatorname{sgn}\right) ^{-1}\left( r\right) \right).
\end{equation*}
In particular, $\beta^\eps=\nabla j^\eps$, where $j_\eps$ is the convex function defined by

\begin{equation*}
j^{\varepsilon }\left( r\right) =\left\{ 
\begin{aligned}
\dfrac{\left\vert r\right\vert ^{2}}{2\varepsilon },&\text{ if }%
\left\vert r\right\vert \leq \varepsilon , \\ 
\left\vert r\right\vert -\dfrac{\varepsilon }{2},&\text{ if }\left\vert
r\right\vert >\varepsilon .%
\end{aligned}%
\right.
\end{equation*}

We shall use the following strategy to prove the main result%
\begin{multline*}
\left\Vert X_{p}\left( t\right) -X_1\left( t\right) \right\Vert _{L^2(\Lambda)}\\
\leq \left\Vert X_{p}\left( t\right) -X_{p}^{\varepsilon }\left( t\right)
\right\Vert _{L^2(\Lambda)}+\left\Vert X_{p}^{\varepsilon }\left( t\right)
-X^{\varepsilon }_1\left( t\right) \right\Vert _{L^2(\Lambda)}+\left\Vert X^{\varepsilon
}_1\left( t\right) -X_1\left( t\right) \right\Vert _{L^2(\Lambda)}
\end{multline*}%
$\mathbb{P}$-a.s. and uniformly in $t\in \left[ 0,T\right] .$

\textbf{Step I}

We note that, taking Remark \ref{rem:BV} into account, the result of
\cite[equation (4.8)]{BDPR} remains valid in our case. Hence,
\begin{equation*}
\underset{\varepsilon \rightarrow 0}{\lim }\underset{t\in \left[ 0,T\right] }%
{\sup }\left\Vert X^{\varepsilon }_1\left( t\right) -X_1\left( t\right)
\right\Vert _{L^2(\Lambda)}=0,\quad \mathbb{P}\text{-a.s.}
\end{equation*}

\textbf{Step II}

Note that we have proved above (proof of Theorem \ref{PLthm}) that
\begin{equation*}
\underset{\varepsilon \rightarrow 0}{\lim }\underset{t\in \left[ 0,T\right] }%
{\sup }\left\Vert X_{p}\left( t\right) -X_{p}^{\varepsilon }\left( t\right)
\right\Vert _{L^2(\Lambda)}=0,\quad \mathbb{P}\text{-a.s. uniformly in }p\in \left(
1,2\right) .
\end{equation*}

\textbf{Step III}

In order to complete the proof we still need to show that for all $%
\varepsilon>0 $ fixed we have 
\begin{equation*}
\underset{p\rightarrow 1}{\lim }\underset{t\in \left[ 0,T\right] }{\sup }%
\left\Vert X_{p}^{\varepsilon }\left( t\right) -X^{\varepsilon }_1\left(
t\right) \right\Vert _{L^2(\Lambda)}=0,\quad \mathbb{P}\text{-a.s.}
\end{equation*}%
To this aim, we consider the definition of the solution for equations%
\begin{equation*}
\left\{ 
\begin{aligned}
\d X_{p}^{\varepsilon }\left( t\right) +A_{p}^{\varepsilon }\left(
X_{p}^{\varepsilon }\right) \d t&=B\,\d W\left( t\right)  \\ 
X_{p}^{\varepsilon }\left( 0\right) &=x%
\end{aligned}%
\right. 
\end{equation*}%
as%
\begin{eqnarray*}
&&\frac{1}{2}\left\Vert X_{p}^{\varepsilon }\left( t\right) -Y\left(
t\right) \right\Vert _{L^2(\Lambda)}^{2}+\int_{0}^{t}\left( \Phi^{p}_{\varepsilon
}\left( X_{p}^{\varepsilon }\left( s\right) \right) -\Phi^{p}_{\varepsilon
}\left( Y\left( s\right) \right) \right)\d s \\
&\leq &\frac{1}{2}\left\Vert x-Y\left( 0\right) \right\Vert
_{L^2(\Lambda)}^{2}+\int_{0}^{t}\left( G\left( s\right) ,X_{p}^{\varepsilon }\left(
s\right) -Y\left( s\right) \right) _{L^2(\Lambda)}\d s,\\
&&\qquad\text{\ for all }t\in \left[
0,T\right] ,\text{ }\mathbb{P}\text{-a.s.}
\end{eqnarray*}

We take $Y=X^{\varepsilon }_1$, the solution of equation 
\begin{equation*}
\left\{ 
\begin{aligned}
\d X^{\varepsilon }_1\left( t\right) +A^{\varepsilon }\left( X^{\varepsilon
}_1\right) \d t&=B\,\d W\left( t\right)  \\ 
X^{\varepsilon }_1\left( 0\right)&=x.%
\end{aligned}%
\right. 
\end{equation*}%
and using the definition of the subdifferential we get that%
\begin{equation}\label{soleq}\begin{split}
&\frac{1}{2}\left\Vert X_{p}^{\varepsilon }\left( t\right) -X^{\varepsilon
}_1\left( t\right) \right\Vert _{L^2(\Lambda)}^{2} \\
&\quad \quad \quad \quad +\int_{0}^{t}\left( \Phi^{p}_{\varepsilon }\left(
X_{p}^{\varepsilon }\left( s\right) \right) -\Phi^{p}_{\varepsilon }\left(
X^{\varepsilon }_1\left( s\right) \right) +\Phi^1_{\varepsilon }\left(
X^{\varepsilon }_1\left( s\right) \right) -\Phi^1_{\varepsilon }\left(
X^{\varepsilon }_p\left( s\right) \right) \right)\, \d s \\
\leq &\frac{1}{2}\left\Vert x-X^{\varepsilon }_1\left( 0\right) \right\Vert
_{L^2(\Lambda)}^{2}=0,
\end{split}\end{equation}
for $t\in \left[ 0,T\right]$ and $\mathbb{P}\text{-a.s.}$ By estimate \eqref{ito}, we can
extract a subsequence $\{p_n\}$ with $\lim_n p_n=1$ such that for $X_n^\eps:=X_{p_n}^\eps$
we have that for $\d t$-a.a. $t\in [0,T]$, $X_n^\eps(t)\rightharpoonup Z^\eps(t)$ weakly in $L^2(\Lambda)$ $\mathbbm{P}$-a.s.
for some $\d t\otimes\mathbbm{P}$-measurable $Z^\eps$ that satisfies
\[\sup_{t\in [0,T]}\norm{Z^\eps(t)}_{L^2(\Lambda)}\le\liminf_n\sup_{t\in [0,T]}\norm{X_n(t)}_{L^2(\Lambda)}\quad\mathbbm{P}\text{-a.s.}\]

We shall need following lemma. Set $\Phi^n_\eps:=\Phi^{p_n}_\eps$.
\begin{lem}\label{fatoulem}
\[\Phi^n_\eps(X^\eps_1(\cdot))-\Phi^n_\eps(X_n^\eps(\cdot))+\Phi^1_\eps(X_n^\eps(\cdot))-\Phi^1_\eps(X^\eps_1(\cdot))\]
is $\mathbbm{P}$-a.s. bounded above by a function in $L^\infty(0,T)$.
\end{lem}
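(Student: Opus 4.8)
The plan is to recognise the displayed quantity as the time-integrand driving the comparison \eqref{soleq}, and to produce for it a dominating function that is \emph{independent of $n$}, since that is exactly what is needed to run a reverse Fatou argument on $\int_0^t F_n\,\d s$ in the step following this lemma. Writing $v(s):=\nabla R_\eps X_1^\eps(s)$ and $w_n(s):=\nabla R_\eps X_n^\eps(s)$ and using $\Phi_\eps^{p_n}=\Phi^n_\eps$, $\Phi_\eps^1=\Phi^1_\eps$ together with \eqref{eq:approxdefi}, the quantity in question is
\[
F_n(s)=\int_\Lambda\bigl[j_\eps^{p_n}(v)-j_\eps^{p_n}(w_n)+j_\eps^1(w_n)-j_\eps^1(v)\bigr]\,\d\xi .
\]
First I would split $F_n$ into its four pieces and exploit the sign of each. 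Since $j_\eps^{p_n}$ and $j_\eps^1$ are Moreau--Yosida regularizations of the nonnegative convex potentials $\tfrac1{p_n}\abs{\cdot}^{p_n}$ and $\abs{\cdot}$ (with $a_p^\eps=\nabla j_\eps^p$, $\beta^\eps=\nabla j_\eps^1$), they are themselves nonnegative; hence the two terms $-j_\eps^{p_n}(w_n)$ and $-j_\eps^1(v)$ are $\le 0$ and may simply be discarded.

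For the two remaining positive terms I would use the elementary envelope bound $0\le j_\eps^p(r)\le\tfrac1{2\eps}\abs{r}^2$, valid for every $p\in[1,2]$ because $s=0$ is admissible in the defining infimum and the potentials vanish at the origin. Combined with the resolvent estimate $\norm{\nabla R_\eps u}_{L^2(\Lambda)}^2\le\eps^{-1}\norm{u}_{L^2(\Lambda)}^2$ (obtained by testing $(1-\eps\Delta)R_\eps u=u$ against $R_\eps u$), this gives
\[
F_n(s)\le\frac1{2\eps}\bigl(\norm{v(s)}_{L^2(\Lambda)}^2+\norm{w_n(s)}_{L^2(\Lambda)}^2\bigr)\le\frac1{2\eps^2}\bigl(\norm{X_1^\eps(s)}_{L^2(\Lambda)}^2+\norm{X_n^\eps(s)}_{L^2(\Lambda)}^2\bigr).
\]
Thus the whole matter reduces to producing a finite, $n$-independent, $\P$-a.s. bound for $\sup_{t\in[0,T]}\norm{X_n^\eps(t)}_{L^2(\Lambda)}^2$; granting this, the right-hand side is constant in $s$, hence lies in $L^\infty(0,T)$, and the lemma follows.

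The main obstacle is therefore this uniform pathwise energy bound, because estimate \eqref{ito} only controls expectations. To obtain it I would subtract the stochastic convolution: set $W_B(t):=\int_0^t B\,\d W(s)$, whose $L^2(\Lambda)$-paths are $\P$-a.s. continuous, and put $v_n:=X_n^\eps-W_B$, which solves the random equation $\dot v_n=-A_{p_n}^\eps(v_n+W_B)$, $v_n(0)=x$, pathwise. Testing against $v_n$ and inserting $v_n=(v_n+W_B)-W_B$, the monotonicity of $A_{p_n}^\eps$ (with $A_{p_n}^\eps(0)=0$) annihilates the principal term, leaving $\tfrac12\tfrac{\d}{\d t}\norm{v_n}_{L^2(\Lambda)}^2\le\norm{A_{p_n}^\eps(W_B)}_{L^2(\Lambda)}\norm{v_n}_{L^2(\Lambda)}$. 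The crucial uniformity is the $p$-independent pointwise estimate $\abs{a_p^\eps(r)}\le\eps^{-1}\abs{r}$, valid since $r^p_\eps(r)$ is a radial contraction of $r$; combined with the dual resolvent bound $\norm{R_\eps\div g}_{L^2(\Lambda)}\le\eps^{-1/2}\norm{g}_{L^2(\Lambda)}$ it yields $\norm{A_{p_n}^\eps(W_B(t))}_{L^2(\Lambda)}\le\eps^{-2}\norm{W_B(t)}_{L^2(\Lambda)}$ uniformly in $n$. Integrating in time then bounds $\sup_{t\in[0,T]}\norm{v_n(t)}_{L^2(\Lambda)}$ by $\norm{x}_{L^2(\Lambda)}+\eps^{-2}T\sup_{t\in[0,T]}\norm{W_B(t)}_{L^2(\Lambda)}$, a finite random variable independent of $n$; adding back $\sup_t\norm{W_B(t)}_{L^2(\Lambda)}$ controls $\sup_t\norm{X_n^\eps(t)}_{L^2(\Lambda)}$ uniformly in $n$, which is precisely the input required above and closes the argument.
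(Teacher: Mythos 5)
Your proof is correct, but it takes a genuinely different route from the paper's in both halves. For the upper bound on the four-term expression, the paper does not discard the negative terms: it controls each difference by the subgradient inequality $\Phi_\eps(u)-\Phi_\eps(v)\le(\nabla\Phi_\eps(u),u-v)_{L^2(\Lambda)}$, then uses $\abs{\beta^\eps}\le 1$ for the $\Phi^1_\eps$-part and the contraction property of $r^p_\eps$ together with the Jensen-type estimate \eqref{jenseneq} for the $\Phi^n_\eps$-part, arriving at a bound that is linear, respectively bilinear, in $\sup_n\norm{X_n^\eps(\cdot)}_{L^2(\Lambda)}$ and $\norm{X_1^\eps(\cdot)}_{L^2(\Lambda)}$. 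Your route --- nonnegativity of $j^p_\eps$ plus the envelope bound $j^p_\eps(r)\le\tfrac{1}{2\eps}\abs{r}^2$ and $\norm{\nabla R_\eps u}_{L^2(\Lambda;\R^d)}^2\le\eps^{-1}\norm{u}_{L^2(\Lambda)}^2$ --- is more elementary and yields a quadratic bound with a worse $\eps$-dependence, which is harmless here since $\eps$ is fixed throughout. The more substantive divergence is in the final step: the paper simply cites the It\={o} estimate \eqref{ito} to assert that the resulting bound is $\P$-a.s. in $L^\infty(0,T)$, even though \eqref{ito} only controls expectations; you noticed exactly this and supplied a genuine pathwise, $n$-uniform bound on $\sup_{t\in[0,T]}\norm{X_n^\eps(t)}_{L^2(\Lambda)}$ by subtracting the stochastic convolution and exploiting monotonicity of $A_{p_n}^\eps$ together with the $p$-independent estimates $\abs{a_p^\eps(r)}\le\eps^{-1}\abs{r}$ and $\norm{A_p^\eps(u)}_{L^2(\Lambda)}\le\eps^{-2}\norm{u}_{L^2(\Lambda)}$ (to land on your stated differential inequality one should split $-(A(v+W),v)=-(A(v+W)-A(W),v)-(A(W),v)$ and apply monotonicity to the first piece, a cosmetic adjustment of your decomposition). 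That extra work actually repairs a soft spot in the paper's own proof --- and in the surrounding argument, which likewise needs such a pathwise bound to extract the weakly convergent subsequence $Z^\eps$ --- at the cost of a longer derivation.
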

\begin{proof}
Set $u:=X_n^\eps(\cdot)$, $v:=X^\eps_1(\cdot)$. Recall that in our notation, $R_\eps:=(1-\eps\Delta)^{-1}$.

Let us treat the term $\Phi^1_\eps(u)-\Phi^1_\eps(v)$ first. By the definition of the subgradient
it is bounded by $(\nabla\Phi^1_\eps(u),u-v)_{L^2(\Lambda)}$. But this term is equal
to
\[\int_\Lambda\lrbr{\beta^\eps(\nabla R_\eps(u))}{\nabla R_\eps(u-v)}\,\d\xi.\]
Since $\abs{\beta^\eps}\le 1$, we
get that the latter is bounded by $\norm{\nabla R_\eps(u-v)}_{L^2(\Lambda;\R^d)}$. By the proof of
Lemma \ref{philem}, $\nabla R_\eps$ is a bounded operator from $L^2(\Lambda)$ to $L^2(\Lambda;\R^d)$.

We get that
\[\Phi^1_\eps(X_n^\eps(\cdot))-\Phi^1_\eps(X^\eps_1(\cdot))\le C\sup_n\norm{X_n^\eps(\cdot)}_{L^2(\Lambda)}+C\norm{X^\eps_1(\cdot)}_{L^2(\Lambda)}\]
which is $\mathbbm{P}$-a.s. in $L^\infty(0,T)$ again by estimate \eqref{ito}.

We continue with the term $\Phi^n_\eps(v)-\Phi^n_\eps(u)$. By the definition of the subgradient it is
bounded by $(\nabla\Phi^n_\eps(v),v-u)_{L^2(\Lambda)}$, which is equal to
\[\int_\Lambda\lrbr{a_p^\eps(\nabla R_\eps(v))}{\nabla R_\eps(v-u)}\,\d\xi.\]
Noticing that $r_\eps^p$ is a contraction on $\R^d$, we can use a similar estimate as
in \eqref{jenseneq} to get that the latter is bounded by
\[C+C\norm{\nabla R_\eps(v)}_{L^2(\Lambda;\R^d)}\norm{\nabla R_\eps(v-u)}_{L^2(\Lambda;\R^d)}.\]
Arguing as above, we see that this term is bounded by
\[C+C\sup_n\norm{X_n^\eps(\cdot)}_{L^2(\Lambda)}\norm{X^\eps_1(\cdot)}_{L^2(\Lambda)}+C\norm{X^\eps_1(\cdot)}_{L^2(\Lambda)}^2,\]
which is $\mathbbm{P}$-a.s. in $L^\infty(0,T)$ by estimate \eqref{ito}.
\end{proof}

We take the limit superior in \eqref{soleq} and continue investigating
\[\limsup_{n}\int_0^t\left[\Phi^n_\eps(X^\eps_1(s))-\Phi^n_\eps(X_n^\eps(s))+\Phi^1_\eps(X_n^\eps(s))-\Phi^1_\eps(X^\eps_1(s))\right]\,\d s.\]
By Lemma \ref{fatoulem}, we can apply (reverse) Fatou's lemma such that it is sufficient to prove that
\[\limsup_{n}\left[\Phi^n_\eps(X^\eps_1(s))-\Phi^n_\eps(X_n^\eps(s))+\Phi^1_\eps(X^\eps_n(s))-\Phi^1_\eps(X^\eps_1(s))\right]\le 0\]
$\mathbbm{P}$-a.s. and
for $\d s$-a.e. $s\in[0,T]$. At this point, we apply Lemma \ref{philem} and get that
\begin{align*}
&\limsup_{n}\left[\Phi^n_\eps(X^\eps_1(s))-\Phi^n_\eps(X_n^\eps(s))+\Phi^1_\eps(X_n^\eps(s))-\Phi^1_\eps(X^\eps_1(s))\right]\\
\le&\limsup_{n}\Phi^n_\eps(X^\eps_1(s))-\liminf_{n}\Phi^n_\eps(X_n^\eps(s))
+\limsup_{n}\Phi^1_\eps(X_n^\eps(s))- \Phi^1_\eps(X^\eps_1(s))\\
\le&\Phi^1_\eps(X^\eps_1(s))-\Phi^1_\eps(Z^\eps(s))+\Phi^1_\eps(Z^\eps(s))-\Phi^1_\eps(X^\eps_1(s))\\
=&0,
\end{align*}
$\mathbbm{P}$-a.s. and
for $\d s$-a.e. $s\in[0,T]$.

\paragraph{Final step}

Going back to
\begin{multline*}
\left\Vert X_{p}\left( t\right) -X_1\left( t\right) \right\Vert _{L^2(\Lambda)}\\
\leq \left\Vert X_{p}\left( t\right) -X_{p}^{\varepsilon }\left( t\right)
\right\Vert _{L^2(\Lambda)}+\left\Vert X_{p}^{\varepsilon }\left( t\right)
-X^{\varepsilon }_1\left( t\right) \right\Vert _{L^2(\Lambda)}+\left\Vert X^{\varepsilon
}_1\left( t\right) -X_1\left( t\right) \right\Vert _{L^2(\Lambda)}
\end{multline*}%
$\mathbb{P}$-a.s. and uniformly in $t\in \left[ 0,T\right]$,
we can complete the proof using Steps I--III as follows.
Let $\delta>0$. Pick $\eps_0>0$, independent of $p$, such that the first and
the third term are less than $\delta/3$. Having fixed $\eps_0$ in such a way,
we can pick $p$ such that the second term is less than $\delta/3$.
\end{proof}

\appendix
\section{Some results on variational convergence}\label{appB}

Let $H$ be a separable Hilbert space.
For a proper, convex functional $\Phi:H\to(-\infty,+\infty]$, the \emph{Legendre transform}
$\Phi^\ast$ is defined by
\[\Phi^\ast(y):=\sup_{x\in H}\left[\lrdel{x}{y}_H-\Phi(x)\right],\quad y\in H.\]
For two functionals $F,G:H\to(-\infty,+\infty]$ the \emph{infimal convolution} $F\# G$ is defined by
\[(F\# G)(y):=\inf_{x\in H}\left[F(x)+G(y-x)\right],\quad y\in H.\]
For a proper, convex, l.s.c. functional $\Phi:H\to(-\infty,+\infty]$, for each $\eps>0$, define the
\emph{Moreau-Yosida regularization}
\[\Phi_\eps:=\Phi\#\frac{1}{2\eps}\norm{\cdot}_H^2.\]
$\Phi_\eps$ is a continuous convex function. Also, $\lim_{\eps\searrow 0}\Phi_\eps=\Phi$ pointwise.

It holds that
\begin{equation}\label{phiasteq}
(\Phi_\eps)^\ast=\Phi^\ast+\frac{\eps}{2}\norm{\cdot}_H^2.
\end{equation}
see e.g. \cite[\S 2.2]{Barb2} and \cite[Ch. 3]{A}.

Recall following definition.
\begin{defi}[Mosco convergence]\label{moscodefi}
Let $\Phi^n:H\to(-\infty,+\infty]$, $n\in\N$, $\Phi:H\to(-\infty,+\infty]$
be proper, convex, l.s.c. functionals.
We say that $\Phi^n\xrightarrow[]{M}\Phi$ in the \emph{Mosco sense} if
\[\tag{M1}
\forall x\in H\;\forall x_n\in H,\; n\in\N,\; x_n\rightharpoonup x\;\text{weakly in $H$}:
           \quad\liminf_n \Phi^n(x_n)\ge\Phi(x).\]
\[\tag{M2}
\forall y\in H\;\exists y_n\in H,\; n\in\N,\; y_n\to y\;\text{strongly in $H$}:
           \quad\limsup_n \Phi^n(y_n)\le\Phi(y).\]
\end{defi}

We shall need following theorem.
\begin{thm}\label{Moscothm}
Let $\Phi^n:H\to(-\infty,+\infty]$, $n\in\N$, $\Phi:H\to(-\infty,+\infty]$
be proper, convex, l.s.c. functionals. Then the following conditions are equivalent.
\begin{enumerate}[(i)]
 \item $\Phi^n\xrightarrow[]{M}\Phi$.
 \item $(\Phi^n)^\ast\xrightarrow[]{M}\Phi^\ast$.
 \item $\forall\eps>0$, $\forall x\in H$: $\lim_n\Phi_{\eps}^n(x)=\Phi_\eps(x)$.
\end{enumerate}
\end{thm}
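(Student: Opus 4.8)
The three conditions concern an arbitrary triple of proper, convex, l.s.c. functionals, so any implication proved ``for all such families'' applies verbatim to the conjugate family $\{(\Phi^n)^\ast\}$, $\Phi^\ast$ as well, since these are again proper, convex and l.s.c. and $\Phi^{\ast\ast}=\Phi$. The plan is therefore to prove the two implications (i) $\Rightarrow$ (iii) and (iii) $\Rightarrow$ (i) for a general family, and to show separately that condition (iii) is invariant under passing to conjugates. Granting these, (i) $\Leftrightarrow$ (iii) follows at once; applying (i) $\Leftrightarrow$ (iii) to the conjugate family gives (ii) $\Leftrightarrow$ [(iii) for the conjugates], and the conjugation-invariance of (iii) then closes the loop to yield (i) $\Leftrightarrow$ (ii) $\Leftrightarrow$ (iii).

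For (i) $\Rightarrow$ (iii), fix $\eps>0$, $x\in H$, and recall $\Phi_\eps(x)=\inf_{z\in H}[\Phi(z)+\frac{1}{2\eps}\norm{x-z}_H^2]$. The inequality $\limsup_n\Phi^n_\eps(x)\le\Phi_\eps(x)$ comes directly from (M2): pick a minimizer $z$ of the infimum and a recovery sequence $z_n\to z$ with $\limsup_n\Phi^n(z_n)\le\Phi(z)$, then estimate $\Phi^n_\eps(x)\le\Phi^n(z_n)+\frac{1}{2\eps}\norm{x-z_n}_H^2$ and pass to the limit. For the reverse inequality, let $z_n$ attain $\Phi^n_\eps(x)$; the upper bound just proved together with a uniform affine minorant for $\{\Phi^n\}$ forces $\{z_n\}$ to be bounded, so along a subsequence $z_{n_k}\rightharpoonup\bar z$, and weak lower semicontinuity of the quadratic together with (M1) gives $\liminf_k\Phi^{n_k}_\eps(x)\ge\Phi(\bar z)+\frac{1}{2\eps}\norm{x-\bar z}_H^2\ge\Phi_\eps(x)$. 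As every subsequence yields such a limit, $\Phi^n_\eps(x)\to\Phi_\eps(x)$.

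The conjugation-invariance of (iii) is where \eqref{phiasteq} does the work. Applying \eqref{phiasteq} to $\Phi^\ast$ in place of $\Phi$ and conjugating yields the companion identity $(\Phi^\ast)_\delta=(\Phi+\frac{\delta}{2}\norm{\cdot}_H^2)^\ast$; substituting it into the elementary Fenchel computation $\Phi_\eps(x)=\frac{1}{2\eps}\norm{x}_H^2-(\Phi+\frac{1}{2\eps}\norm{\cdot}_H^2)^\ast(x/\eps)$ produces the Moreau reciprocity formula
\[
\Phi^n_\eps(x)=\frac{1}{2\eps}\norm{x}_H^2-\bigl((\Phi^n)^\ast\bigr)_{1/\eps}\!\left(\tfrac{x}{\eps}\right),
\]
and likewise for $\Phi$. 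Since the first term is independent of $n$ and $(\eps,x)\mapsto(1/\eps,x/\eps)$ is a bijection of $(0,\infty)\times H$, the convergence $\Phi^n_\eps(x)\to\Phi_\eps(x)$ for all $\eps,x$ is equivalent to $((\Phi^n)^\ast)_\delta(y)\to(\Phi^\ast)_\delta(y)$ for all $\delta>0$, $y\in H$, which is exactly (iii) for the conjugate family.

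It remains to prove (iii) $\Rightarrow$ (i), which I expect to be the main obstacle. For (M2), given $y\in H$ I build a recovery sequence from the resolvents $J^n_\eps y:=(I+\eps\partial\Phi^n)^{-1}y$: the identity $\Phi^n_\eps(y)=\Phi^n(J^n_\eps y)+\frac{1}{2\eps}\norm{y-J^n_\eps y}_H^2$ and the convergence $\Phi^n_\eps(y)\to\Phi_\eps(y)$ from (iii) let me select a diagonal $\eps=\eps_n\downarrow 0$ along which $J^n_{\eps_n}y\to y$ strongly and $\limsup_n\Phi^n(J^n_{\eps_n}y)\le\Phi(y)$. For (M1), given $x_n\rightharpoonup x$ I use $\Phi^n(x_n)\ge\Phi^n_\eps(x_n)$ together with the convexity inequality for the smooth envelope $\Phi^n_\eps$ to transfer the base point from $x_n$ to $x$, invoke (iii), and send $\eps\downarrow 0$ using $\Phi_\eps(x)\uparrow\Phi(x)$. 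The genuinely technical points are the uniform affine minorant (equicoercivity) used above and the strong resolvent convergence $J^n_\eps\to J_\eps$ implicit in these arguments; both reflect the interplay between pointwise envelope convergence and graph convergence of the subdifferentials, and I would either derive them via a Banach--Steinhaus argument applied to the conjugates or simply invoke \cite[Ch. 3]{A}.
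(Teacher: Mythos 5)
The paper offers no argument for this theorem at all: its ``proof'' is the citation \cite[Theorems 3.18 and 3.26]{A}, so there is nothing in the paper to compare your attempt against line by line. What you have written is in substance a reconstruction of Attouch's own proof, and the parts you actually carry out are correct. The implication (i) $\Rightarrow$ (iii) is essentially complete: the upper bound via a recovery sequence at a minimizer of $z\mapsto\Phi(z)+\frac{1}{2\eps}\norm{x-z}_H^2$, and the lower bound via boundedness and weak compactness of the approximate minimizers combined with (M1) and weak lower semicontinuity of the quadratic, are exactly the standard argument. The conjugation-invariance of (iii) is the nicest part of your write-up: the Moreau reciprocity formula $\Phi_\eps(x)=\frac{1}{2\eps}\norm{x}_H^2-\bigl((\Phi)^\ast\bigr)_{1/\eps}(x/\eps)$ does follow from \eqref{phiasteq} by the computation you indicate, and together with the bijection $(\eps,x)\mapsto(1/\eps,x/\eps)$ it reduces (ii) to ``(iii) for the conjugates'' exactly as claimed; this is precisely how the bicontinuity of the Legendre transform (Attouch's Theorem 3.18) is linked to the Yosida-approximate criterion (Theorem 3.26). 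The thin spot is (iii) $\Rightarrow$ (i), which you only sketch, and the two technical ingredients you defer are genuinely nontrivial rather than routine: the uniform affine minorant (equivalently, a point $w_0$ with $\sup_n(\Phi^n)^\ast(w_0)<+\infty$) requires a Baire-category/uniform-boundedness argument and does not follow from pointwise bounds on $\Phi^n_\eps$ alone (those only give a uniform \emph{quadratic} minorant), and the strong resolvent convergence $J^n_\eps\to J_\eps$ cannot be extracted by testing the strong-convexity inequality at $J_\eps x$ without circularly invoking (M2) at that point. Since you explicitly propose to import these from \cite[Ch.~3]{A}, and the paper imports the entire theorem from the same source, your proposal is acceptable as it stands; a genuinely self-contained proof would require filling in those two lemmas.
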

\begin{proof}
See \cite[Theorems 3.18 and 3.26]{A}.
\end{proof}

\begin{cor}\label{cor1}
Suppose that $\Phi^n\xrightarrow[]{M}\Phi$. Then for each $\eps>0$, $\Phi_\eps^n\xrightarrow[]{M}\Phi_\eps$, too.
\end{cor}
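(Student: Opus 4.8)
The plan is to deduce the corollary from the characterization of Mosco convergence by pointwise convergence of the Moreau--Yosida regularizations, i.e. the equivalence (i)~$\Leftrightarrow$~(iii) in Theorem~\ref{Moscothm}. Each $\Phi^n_\eps$ and $\Phi_\eps$ is a continuous convex function, hence in particular proper, convex and l.s.c., so Theorem~\ref{Moscothm} applies verbatim to the sequence $(\Phi^n_\eps)_n$ and its candidate limit $\Phi_\eps$. Thus, to prove $\Phi^n_\eps\xrightarrow{M}\Phi_\eps$ it suffices to verify that for every $\delta>0$ and every $x\in H$ one has $\lim_n(\Phi^n_\eps)_\delta(x)=(\Phi_\eps)_\delta(x)$, where $(\cdot)_\delta$ denotes the Moreau--Yosida regularization at level $\delta$.

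The key observation is a semigroup (resolvent) identity for the Moreau--Yosida regularization: for any proper, convex, l.s.c. $\Psi:H\to(-\infty,+\infty]$ and any $\eps,\delta>0$,
\[(\Psi_\eps)_\delta=\Psi_{\eps+\delta}.\]
I would establish this directly from \eqref{phiasteq} together with the injectivity of the Legendre transform on proper convex l.s.c. functions. Applying \eqref{phiasteq} twice (first to the base function $\Psi$, then to $\Psi_\eps$) gives
\[\bigl((\Psi_\eps)_\delta\bigr)^\ast=(\Psi_\eps)^\ast+\tfrac{\delta}{2}\norm{\cdot}_H^2=\Psi^\ast+\tfrac{\eps}{2}\norm{\cdot}_H^2+\tfrac{\delta}{2}\norm{\cdot}_H^2=\Psi^\ast+\tfrac{\eps+\delta}{2}\norm{\cdot}_H^2=(\Psi_{\eps+\delta})^\ast,\]
and since both $(\Psi_\eps)_\delta$ and $\Psi_{\eps+\delta}$ are continuous convex functions, the relation $\Phi^{\ast\ast}=\Phi$ forces them to coincide.

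Applying this identity with $\Psi=\Phi^n$ and with $\Psi=\Phi$ reduces the desired convergence to
\[\lim_n\Phi^n_{\eps+\delta}(x)=\Phi_{\eps+\delta}(x)\quad\text{for all }\delta>0,\ x\in H.\]
But this is precisely condition (iii) of Theorem~\ref{Moscothm} for the hypothesis $\Phi^n\xrightarrow{M}\Phi$, evaluated at the regularization level $\gamma:=\eps+\delta>0$; it therefore holds by assumption, which completes the argument.

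The only non-formal point I anticipate is the semigroup identity $(\Psi_\eps)_\delta=\Psi_{\eps+\delta}$. One could alternatively prove it at the level of infimal convolutions, using $\tfrac{1}{2\eps}\norm{\cdot}_H^2\#\tfrac{1}{2\delta}\norm{\cdot}_H^2=\tfrac{1}{2(\eps+\delta)}\norm{\cdot}_H^2$ and the associativity of $\#$; but the dual route via \eqref{phiasteq} is cleaner, since that identity is already recorded. I would deliberately avoid a direct verification of (M1)/(M2) for $\Phi^n_\eps$: while (M2) is immediate from pointwise convergence (take the constant recovery sequence), the liminf inequality (M1) would require extracting a weak limit of the proximal points $y_n$ realizing $\Phi^n_\eps(x_n)$ and controlling their boundedness — exactly the compactness bookkeeping that Theorem~\ref{Moscothm} already absorbs.
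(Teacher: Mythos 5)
Your argument is correct, but it takes a different route from the paper's. The paper goes through condition (ii) of Theorem~\ref{Moscothm}: from $\Phi^n\xrightarrow{M}\Phi$ it deduces $(\Phi^n)^\ast\xrightarrow{M}\Phi^\ast$, then verifies (M1) and (M2) \emph{directly} for the conjugates $(\Phi^n_\eps)^\ast=(\Phi^n)^\ast+\tfrac{\eps}{2}\norm{\cdot}_H^2$ — (M2) by transporting the recovery sequence through \eqref{phiasteq}, and (M1) by superadditivity of $\liminf$ together with weak lower semi-continuity of the norm — and finally dualizes back. You instead go through condition (iii), and the burden shifts to the resolvent identity $(\Psi_\eps)_\delta=\Psi_{\eps+\delta}$, which you correctly derive by applying \eqref{phiasteq} twice and invoking biduality (both sides being continuous convex, hence equal to their biconjugates); after that, $(\Phi^n_\eps)_\delta(x)=\Phi^n_{\eps+\delta}(x)\to\Phi_{\eps+\delta}(x)=(\Phi_\eps)_\delta(x)$ is literally condition (iii) for the original sequence at level $\eps+\delta$. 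What your approach buys is that no (M1)/(M2) verification is needed at all — in particular you never touch weak convergence or lower semi-continuity of the norm — at the price of establishing the semigroup property of the Moreau--Yosida regularization, which the paper does not need. Both proofs rest on the same two ingredients (Theorem~\ref{Moscothm} and \eqref{phiasteq}), so the difference is one of routing rather than substance; your version is arguably the cleaner of the two, and your closing remark about why a direct (M1) verification for $\Phi^n_\eps$ would be more painful is accurate.
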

\begin{proof}
Suppose that $\Phi^n\xrightarrow[]{M}\Phi$. By Theorem \ref{Moscothm}, $(\Phi^n)^\ast\xrightarrow[]{M}\Phi^\ast$, too.

If we can prove for each $\eps>0$ that $(\Phi^n_\eps)^\ast\xrightarrow[]{M}(\Phi_\eps)^\ast$, we are done by
Theorem \ref{Moscothm}. (M2) in Definition \ref{moscodefi} follows easily, using equation \eqref{phiasteq} and (M2) for $\{(\Phi_n)^\ast\}$ and $\Phi^\ast$.

Let $x_n\in H$, $n\in\N$, $x\in H$ such that $x_n\rightharpoonup x$ weakly in $H$. By \eqref{phiasteq},
weak lower semi-continuity of
the norm
and (M1) in Definition \ref{moscodefi} for $\{(\Phi_n)^\ast\}$ and $\Phi^\ast$ we get that
\begin{multline*}\liminf_n(\Phi_\eps^n)^\ast(x_n)=\liminf_n\left[(\Phi^n)^\ast(x_n)+\frac{\eps}{2}\norm{x_n}_H^2\right]\\
\ge\liminf_n(\Phi^n)^\ast(x_n)+\liminf_n\frac{\eps}{2}\norm{x_n}_H^2\ge\Phi^\ast(x)+\frac{\eps}{2}\norm{x}_H^2=(\Phi_\eps)^\ast(x).
\end{multline*}
\end{proof}

\subsection{The $L^p$-case}\label{SomeResultsSec}

Let $p\in[1,2]$. We define $j^p:\R^d\to\R$ by
$j^{p}\left( x\right):=\frac{1}{p}\left\vert
x\right\vert^{p}$. Obviously, if $p>1$, each $j^p$
is a convex $C^1$-function.
For $\eps>0$,
let
\[j^p_\eps(x):=\inf_{y\in\R^d}\left[j^p(y)+\frac{1}{2\eps}\abs{x-y}^2\right]\]
be its regularization.
For $u\in L^2(\Lambda;\R^d)$, set
\[\Psi^p(u):=\int_\Lambda j^p(u)\,\d\xi.\]
$\Psi^p$ is a continuous convex functional on $L^2(\Lambda;\R^d)$ for each $p\in[1,2]$.

\begin{lem}\label{lem1}
For $\eps>0$, let $\Psi^p_\eps$ be the Moreau-Yosida regularization of $\Psi^p$ in $L^2(\Lambda;\R^d)$. Then
\[\Psi^p_\eps(v)=\int_\Lambda j^p_\eps(v)\,\d\xi\quad\forall v\in L^2(\Lambda;\R^d).\]
\end{lem}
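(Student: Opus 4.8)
The plan is to compute the Moreau--Yosida regularization of $\Psi^p$ directly from its definition and to interchange the infimum with the integral, the interchange being justified by a pointwise measurable selection of minimizers. First I would write out, for a fixed $v\in L^2(\Lambda;\R^d)$,
\[\Psi^p_\eps(v)=\inf_{u\in L^2(\Lambda;\R^d)}\int_\Lambda\left[j^p(u(\xi))+\frac{1}{2\eps}\abs{v(\xi)-u(\xi)}^2\right]\d\xi,\]
using that $\Psi^p(u)=\int_\Lambda j^p(u)\,\d\xi$. The claim is that the right-hand side equals $\int_\Lambda j^p_\eps(v)\,\d\xi$, and I would prove the two inequalities separately.

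The lower bound $\Psi^p_\eps(v)\ge\int_\Lambda j^p_\eps(v)\,\d\xi$ is immediate: for every $u\in L^2(\Lambda;\R^d)$ and a.e.\ $\xi\in\Lambda$ the integrand dominates the pointwise infimum,
\[j^p(u(\xi))+\frac{1}{2\eps}\abs{v(\xi)-u(\xi)}^2\ge\inf_{y\in\R^d}\left[j^p(y)+\frac{1}{2\eps}\abs{v(\xi)-y}^2\right]=j^p_\eps(v(\xi)),\]
so integrating over $\Lambda$ and then taking the infimum over $u$ gives the bound.

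For the reverse inequality I would exhibit a single $u^\ast\in L^2(\Lambda;\R^d)$ realizing the pointwise minimum. Since $j^p$ is proper, convex and lower semi-continuous on $\R^d$, the minimizer of $y\mapsto j^p(y)+\frac{1}{2\eps}\abs{x-y}^2$ is unique and equals the resolvent $r^p_\eps(x)=(1+\eps\,\partial j^p)^{-1}(x)$ (for $p>1$ this is the map $r^p_\eps$ with $\partial j^p=a_p$, and for $p=1$ it is $(1+\eps\,\sgn)^{-1}$). The main point, and the only nontrivial step, is to verify that $u^\ast:=r^p_\eps(v(\cdot))$ lies in $L^2(\Lambda;\R^d)$, which is exactly where the interchange of infimum and integral could fail; this is where I expect the work to be. It follows because $r^p_\eps$ is a contraction on $\R^d$ fixing the origin, $r^p_\eps(0)=0$ (the minimum of $j^p$ being attained at $0$), so that $\abs{u^\ast(\xi)}=\abs{r^p_\eps(v(\xi))-r^p_\eps(0)}\le\abs{v(\xi)}$ and hence $\norm{u^\ast}_{L^2(\Lambda;\R^d)}\le\norm{v}_{L^2(\Lambda;\R^d)}$; measurability of $u^\ast$ follows from continuity of $r^p_\eps$. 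Plugging $u^\ast$ into the functional and using that it attains the pointwise infimum yields
\[\Psi^p_\eps(v)\le\int_\Lambda\left[j^p(u^\ast(\xi))+\frac{1}{2\eps}\abs{v(\xi)-u^\ast(\xi)}^2\right]\d\xi=\int_\Lambda j^p_\eps(v)\,\d\xi,\]
which, combined with the lower bound, proves the identity. Finally I would note that the right-hand side is finite, since choosing $y=0$ in the definition gives $0\le j^p_\eps(v)\le\frac{1}{2\eps}\abs{v}^2\in L^1(\Lambda)$, and that the entire argument is uniform in $p\in[1,2]$.
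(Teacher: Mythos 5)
Your argument is correct. The paper itself disposes of this lemma in one line by invoking the interchange-of-minimization-and-integration theorem for normal integrands, \cite[Theorem~14.60]{RockWets}, whereas you give a self-contained proof that exploits the specific structure of $j^p$: the easy pointwise inequality for the lower bound, and for the upper bound the explicit minimizer $u^\ast=r^p_\eps(v(\cdot))=(1+\eps\,\partial j^p)^{-1}(v(\cdot))$, whose membership in $L^2(\Lambda;\R^d)$ follows from nonexpansiveness of the resolvent together with $r^p_\eps(0)=0$. All steps check out (including the implicit integrability of $j^p(u^\ast)$, since the two nonnegative terms sum to $j^p_\eps(v)\le\frac{1}{2\eps}\abs{v}^2\in L^1(\Lambda)$). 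What the citation buys is brevity and generality --- Theorem~14.60 handles arbitrary normal integrands on decomposable spaces without identifying the minimizer. What your route buys is transparency and, incidentally, the explicit proximal map and its contraction property, which the paper in fact uses again later (in Lemma~\ref{apriorilemma} and Lemma~\ref{fatoulem}); so your proof makes visible a fact the paper relies on anyway. Your closing remark that the bound $0\le j^p_\eps(v)\le\frac{1}{2\eps}\abs{v}^2$ is uniform in $p\in[1,2]$ is a harmless bonus not needed for the statement itself.
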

\begin{proof}
Straightforward from \cite[Theorem 14.60]{RockWets}.
\end{proof}

We would like to prove a convergence result, which shall be useful later.
See the appendix for the terminology. Compare also with \cite{AttCom}.
\begin{lem}\label{lem2}
Let $\{p_n\}\subset[1,2]$, $p_0\in [1,2]$ such that $\lim_n p_n=p_0$. Then
\[\Psi^{p_n}\xrightarrow[]{M}\Psi^{p_0}\quad\text{in the Mosco sense in $L^2(\Lambda;\R^d)$.}\]
\end{lem}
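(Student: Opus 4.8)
The plan is to verify the two defining conditions (M1) and (M2) of Mosco convergence (Definition \ref{moscodefi}) directly, the whole argument resting on a single pointwise-value convergence together with a convex linearization. First I record two elementary facts on the bounded domain $\Lambda$: since $p\le 2$ one has $L^2(\Lambda;\R^d)\subset L^p(\Lambda;\R^d)$, so each $\Psi^p$ is finite-valued (as already noted), and for $p\in[1,2]$, $t\ge 0$ one has the uniform bounds $\tfrac1p t^{p}\le 1+t^{2}$ and $t^{p-1}\le 1+t$. The first step is then the \emph{pointwise-value lemma}: for fixed $u\in L^2(\Lambda;\R^d)$,
\[\lim_n\Psi^{p_n}(u)=\Psi^{p_0}(u),\]
which follows from $\tfrac1{p_n}\abs{u(\xi)}^{p_n}\to\tfrac1{p_0}\abs{u(\xi)}^{p_0}$ a.e. (continuity of $p\mapsto t^{p}$) and Lebesgue's dominated convergence theorem with majorant $1+\abs{u}^{2}\in L^1(\Lambda)$.

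Granted this, (M2) is immediate: for any $y\in L^2(\Lambda;\R^d)$ the constant recovery sequence $y_n\equiv y$ satisfies $\limsup_n\Psi^{p_n}(y)=\Psi^{p_0}(y)$.

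For (M1), let $x_n\rightharpoonup x$ weakly in $L^2(\Lambda;\R^d)$; in particular $\sup_n\norm{x_n}_{L^2}<\infty$. I would linearize $\Psi^{p_n}$ at $x$ using the subgradient inequality for the convex integrand $j^{p_n}$:
\[\Psi^{p_n}(x_n)\ge\Psi^{p_n}(x)+(g_n,x_n-x)_{L^2(\Lambda;\R^d)},\]
where $g_n(\xi):=\abs{x(\xi)}^{p_n-2}x(\xi)$ if $p_n>1$, and $g_n(\xi):=\sgn(x(\xi))$ (i.e. $x(\xi)/\abs{x(\xi)}$, and $0$ where $x(\xi)=0$) if $p_n=1$; in each case $g_n(\xi)$ is an admissible subgradient of $j^{p_n}$ at $x(\xi)$. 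The first term converges to $\Psi^{p_0}(x)$ by the pointwise-value lemma. For the second, the bound $\abs{g_n}\le 1+\abs{x}\in L^2(\Lambda)$ together with the a.e. convergence $g_n\to g_0$, where $g_0$ is the analogous selection for $p_0$ (note $\abs{x}^{p_n-1}\to 1$ on $\{x\neq0\}$ when $p_0=1$), gives, by dominated convergence, $g_n\to g_0$ \emph{strongly} in $L^2(\Lambda;\R^d)$. Since $x_n-x\rightharpoonup 0$ weakly while $\norm{x_n-x}_{L^2}$ stays bounded, the splitting $(g_n,x_n-x)_{L^2}=(g_n-g_0,x_n-x)_{L^2}+(g_0,x_n-x)_{L^2}$ shows this term tends to $0$. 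Taking $\liminf_n$ yields $\liminf_n\Psi^{p_n}(x_n)\ge\Psi^{p_0}(x)$, which is (M1).

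The hard part is the endpoint $p_0=1$, where $j^{p_0}$ is no longer differentiable and the sequence may even contain indices with $p_n=1$. This is precisely where one must pass from gradients to the measurable subgradient selection $\sgn$, and where the \emph{uniform} majorants $1+\abs{u}^2$ and $1+\abs{x}$—valid over the entire closed range $[1,2]$—do the real work, letting one dominated-convergence argument deliver both the value convergence and the strong convergence of the linearizing fields $g_n$ up to and including the singular exponent. Everything else is routine, and combining (M1) with (M2) gives $\Psi^{p_n}\xrightarrow{M}\Psi^{p_0}$.
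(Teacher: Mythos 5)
Your proof is correct, and your treatment of (M2) (constant recovery sequence plus dominated convergence with the majorant $1+\abs{u}^2$, valid uniformly over $p\in[1,2]$ because $\Lambda$ is bounded) coincides with the paper's. For (M1), however, you take a genuinely different route. The paper argues by duality: it fixes $v\in L^\infty(\Lambda;\R^d)$, applies H\"older's inequality with exponent $p_n$ to $\int_\Lambda\lrbr{u_n}{v}\,\d\xi$, passes to the limit in $n$ (the pairing converges by weak convergence, the H\"older constants converge since $p_n\to p_0$), and then recovers $\Psi^{p_0}(u)$ by taking the supremum over admissible $v$, i.e.\ it represents $\Psi^{p_0}$ as a supremum of weakly continuous functions of $u$; a preliminary subsequence extraction reduces to the case where the $\liminf$ is a limit. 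You instead linearize at the weak limit via the subgradient inequality $\Psi^{p_n}(x_n)\ge\Psi^{p_n}(x)+(g_n,x_n-x)_{L^2}$ with the explicit measurable selection $g_n=\abs{x}^{p_n-2}x$ (and $\sgn$ with value $0$ on $\{x=0\}$ when $p_n=1$), then use the uniform majorant $1+\abs{x}\in L^2(\Lambda)$ to get $g_n\to g_0$ strongly in $L^2$, so that the pairing with $x_n-x\rightharpoonup 0$ vanishes in the limit. Both arguments are sound and both lean on the boundedness of $\Lambda$ for the integrable majorants; yours is somewhat more elementary (only dominated convergence and Cauchy--Schwarz, no subsequence extraction, and the endpoint $p_0=1$ is absorbed into the same selection formula), while the paper's duality argument avoids any explicit subgradient computation and would survive for convex integrands known only through their dual representation.
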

\begin{proof}
Let us prove (M1) in Definition \ref{moscodefi} first. Let $u_n\in L^2(\Lambda;\R^d)$, $n\in\N$, $u\in L^2(\Lambda;\R^d)$ such that
$u_n\rightharpoonup u$ weakly in $L^2(\Lambda;\R^d)$.
W.l.o.g. $\liminf_n\Psi^{p_n}(u_n)<+\infty$. Extract a subsequence (also denoted by $\{u_n\}$) such that
\[\liminf_n\Psi^{p_n}(u_n)=\lim_n\Psi^{p_n}(u_n).\]
Let $v\in L^\infty(\Lambda;\R^d)$. Clearly,
\[\lim_n\int_\Lambda\lrbr{u_n}{v}\,\d\xi=\int_\Lambda\lrbr{u}{v}\,\d\xi.\]
Also, by H\"older's inequality,
\[\frac{1}{p_n}\lrabs{\int_\Lambda\lrbr{u_n}{v}\,\d\xi}^{p_n}\le \Psi^{p_n}(u_n)
\times\left\{\begin{aligned}
    &\abs{\Lambda}^{p_n-1}\norm{v}_{L^\infty(\Lambda;\R^d)}^{p_n},&&\;\;\text{if}\;\;p_0=1,\\                         & \left(\int_\Lambda\abs{v}^{p_n/(p_n-1)}\,d\xi\right)^{p_n-1},&&\;\;\text{if}\;\;p_0>1,                                        \end{aligned}\right.\]
(here $\abs{\Lambda}=\int_\Lambda\,d\xi$).
Upon taking the limit $n\to\infty$, we get that
\[\frac{1}{p_0}\lrabs{\int_\Lambda\lrbr{u}{v}\,\d\xi}^{p_0}\le\liminf_n\Psi^{p_n}(u_n)\times\left\{\begin{aligned}
    &\norm{v}_{L^\infty(\Lambda;\R^d)},&&\;\;\text{if}\;\;p_0=1,\\                         & \left(\int_\Lambda\abs{v}^{p_0/(p_0-1)}\,d\xi\right)^{p_0-1},&&\;\;\text{if}\;\;p_0>1.                                        \end{aligned}\right.
\]
Taking the supremum over all $v\in L^\infty(\Lambda;\R^d)$ with $\norm{v}_{L^\infty(\Lambda;\R^d)}^{p_0/(p_0-1)}\le 1$ and using the
l.s.c. property of the supremum, we get that
\[\Psi^{p_0}(u)=\frac{1}{p_0}\int_\Lambda\abs{u}^{p_0}\,\d\xi\le\liminf_n\Psi^{p_n}(u_n).\]
Since the same argument works for any subsequence of $\{u_n\}$, we have proved (M1).

We are left to prove (M2) in Definition \ref{moscodefi}. Let $u\in L^2(\Lambda;\R^d)$.
Clearly for a.e. $\xi\in\Lambda$
\[\lim_n\frac{1}{p_n}\abs{u(\xi)}^{p_n}=\frac{1}{p_0}\abs{u(\xi)}^{p_0}.\]
But for all $p\in[1,2]$,
\[\frac{1}{p}\abs{u}^p\le 1_\Lambda+\abs{u}^2\in L^1(\Lambda).\]
Hence an application of Lebesgue's dominated convergence theorem yields
\[\lim_n\Psi^{p_n}(u)=\Psi^{p_0}(u).\]
(M2) is proved.
\end{proof}

Theorem \ref{Moscothm}, Corollary \ref{cor1} and Lemmas \ref{lem1}, \ref{lem2} together give:
\begin{cor}
Let $\{p_n\}\subset[1,2]$ such that $\lim_n p_n=1$. Let $\eps>0$.
Then for $u\in L^2(\Lambda;\R^d)$, we have that
\begin{equation}\label{regconveq1}
\lim_n\int_\Lambda j_\eps^{p_n}(u)\,\d\xi=\int_\Lambda j_\eps^{1}(u)\,\d\xi.
\end{equation}
Furthermore, if $u_n\rightharpoonup u$ converges weakly in $L^2(\Lambda;\R^d)$,
we have that
\begin{equation}\label{regconveq2}
\liminf_n\int_\Lambda j_\eps^{p_n}(u_n)\,\d\xi\ge\int_\Lambda j_\eps^{1}(u)\,\d\xi.
\end{equation}
\end{cor}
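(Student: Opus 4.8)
The plan is to reduce both assertions to the Mosco convergence of the regularized functionals, since all the analytic content has already been packaged into the preceding lemmas. First I would invoke Lemma~\ref{lem2} with $p_0=1$: because $p_n\to 1$, it gives $\Psi^{p_n}\xrightarrow[]{M}\Psi^1$ in the Mosco sense on $L^2(\Lambda;\R^d)$. The decisive bridge is Lemma~\ref{lem1}, which identifies the abstract Moreau--Yosida regularization $(\Psi^p)_\eps$ (the infimal convolution of $\Psi^p$ with $\tfrac{1}{2\eps}\norm{\cdot}^2_{L^2(\Lambda;\R^d)}$) with the concrete integral functional $v\mapsto\int_\Lambda j^p_\eps(v)\,\d\xi$. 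With this identification in hand, both \eqref{regconveq1} and \eqref{regconveq2} become statements purely about $(\Psi^{p_n})_\eps$ and $(\Psi^1)_\eps$.

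For the pointwise convergence \eqref{regconveq1}, I would apply the equivalence (i)$\Leftrightarrow$(iii) of Theorem~\ref{Moscothm}. The convergence $\Psi^{p_n}\xrightarrow[]{M}\Psi^1$ is exactly condition~(i), so condition~(iii) yields $\lim_n(\Psi^{p_n})_\eps(u)=(\Psi^1)_\eps(u)$ for every $\eps>0$ and every $u\in L^2(\Lambda;\R^d)$; reading this through Lemma~\ref{lem1} is precisely \eqref{regconveq1}.

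For the weak lower-semicontinuity estimate \eqref{regconveq2}, I would instead route through Corollary~\ref{cor1}: from $\Psi^{p_n}\xrightarrow[]{M}\Psi^1$ it produces $(\Psi^{p_n})_\eps\xrightarrow[]{M}(\Psi^1)_\eps$ for each fixed $\eps>0$. The first defining property~(M1) of Mosco convergence from Definition~\ref{moscodefi}, applied to a weakly convergent sequence $u_n\rightharpoonup u$, then reads $\liminf_n(\Psi^{p_n})_\eps(u_n)\ge(\Psi^1)_\eps(u)$, which is \eqref{regconveq2} after the same identification.

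Because the corollary merely assembles results that already do the heavy lifting, there is no genuinely hard analytic step. The only point requiring care is keeping the two regularizations apart---the integrand-level $j^p_\eps$ used to build $\Phi^p_\eps$ in \eqref{eq:approxdefi} versus the Hilbert-space Moreau--Yosida regularization $(\Psi^p)_\eps$ entering Theorem~\ref{Moscothm}---and confirming that they coincide, which is exactly the content of Lemma~\ref{lem1}. Taking the limit point to be $p_0=1$ is what makes $\Psi^1$, and hence $(\Psi^1)_\eps=\int_\Lambda j^1_\eps(\cdot)\,\d\xi$, the correct target.
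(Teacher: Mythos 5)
Your argument is correct and is exactly the route the paper intends: the paper simply states that Theorem~\ref{Moscothm}, Corollary~\ref{cor1} and Lemmas~\ref{lem1}, \ref{lem2} ``together give'' the corollary, and you have assembled them in precisely the expected way (Lemma~\ref{lem2} for Mosco convergence, Lemma~\ref{lem1} to identify the regularizations, Theorem~\ref{Moscothm}(iii) for \eqref{regconveq1}, and Corollary~\ref{cor1} with (M1) for \eqref{regconveq2}). No discrepancies to report.
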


\end{document}